\documentclass[10pt]{amsart}
\theoremstyle{plain}
\usepackage{amsmath, amssymb, amscd, graphicx, hyperref, mathtools, float}
\setlength{\textwidth}{5.5in}
\setlength{\oddsidemargin}{0.50in}
\setlength{\evensidemargin}{0.40in}

\newtheorem{lemma}{Lemma}[section]
\newtheorem{theorem}[lemma]{Theorem}
\newtheorem{cor}[lemma]{Corollary}
\newtheorem{definition}[lemma] {Definition}

\newtheorem{conj}[lemma]{Conjecture}

\newtheorem*{theorem*}{Theorem}
\newtheorem*{conj*}{Conjecture}

\usepackage{color}

\newcommand{\sge}{\mathchoice
  {\mbox{\smaller$\ge$}}
  {\mbox{\smaller$\ge$}}
  {\mbox{\larger[-6]$\ge$}}
  {\mbox{\larger[-7]$\ge$}}
}

\newcommand{\sg}{\mathchoice
  {\mbox{\smaller$>$}}
  {\mbox{\smaller$>$}}
  {\mbox{\larger[-6]$>$}}
  {\mbox{\larger[-7]$>$}}
}

\newcommand{\lzero}{\mathchoice
  {\mbox{\smaller$0$}}
  {\mbox{\smaller$0$}}
  {\raisebox{-.7pt}{\larger[-3]$0$}}
  {\raisebox{-.7pt}{\larger[-4]$0$}}
}

\newcommand{\gezero}{\mathchoice
  {\mbox{\smaller$0$}}
  {\mbox{\smaller$0$}}
  {\raisebox{-.8pt}{\larger[-3]$0$}}
  {\raisebox{-.8pt}{\larger[-4]$0$}}
}

\newcommand{\Z}{{{\mathbb Z}}}
\newcommand{\lL}{\mathcal{L}}
\newcommand{\hfhat}{{{\widehat{HF}}}}
\newcommand{\Deltas}{\overline{\Delta}}
\newcommand{\dtge}{{{\mathcal{D}_{\mkern-11.2mu\phantom{a}^{\sge \mkern.3mu\gezero}\!}^{\tau}}}}
\newcommand{\dtgz}{{{\mathcal{D}_{\mkern-11.2mu\phantom{a}^{\sg \mkern.3mu\lzero}\!}^{\tau}}}}

\DeclarePairedDelimiter{\ceil}{\lceil}{\rceil}

\title{L-space fillings and Generalized solid tori}
\author{Thomas Gillespie}
\address{Department of Pure Mathematics and Mathematical Statistics, University of Cambridge, UK}
\email{tjg35@cam.ac.uk}
\thanks{}

\begin{document}
\begin{abstract}
Much work has been done recently towards trying to understand the topological significance of being an L-space. Building on work of Rasmussen and Rasmussen, we give a topological characterisation of Floer simple manifolds such that all non-longitudinal fillings are L-spaces. We use this to partially classify L-space twisted torus knots in \(S^1 \times S^2\) and resolve a question asked by Rasmussen and Rasmussen.
\end{abstract}
\maketitle
 
\section{Introduction}
If \(M\) is a rational homology \(S^3\), we can define the \emph{Heegaard Floer homology} \(\hfhat(M)\), which satisfies \(rk(\hfhat(M)) \geq |H_1(M)|\). It is natural to try and understand the manifolds with the simplest Heegaard Floer homology, those with \(rk(\hfhat(M)) = |H_1(M)|\), which we call \emph{L-spaces}. A conjecture of Boyer, Gordon and Watson \cite{lofg} suggests a concise description of such manifolds in terms of \(\pi_1(M)\).

\begin{conj}[\cite{lofg}]
An irreducible rational homology 3-sphere is an L-space if and only if its fundamental group is not left-orderable.
\end{conj}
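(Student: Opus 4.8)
The plan is to attack both implications by a Dehn-filling and JSJ-decomposition strategy: reduce to the geometric pieces where the statement is already understood, and then control how both ``being an L-space'' and ``having non-left-orderable \(\pi_1\)'' behave under gluing along incompressible tori.

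First I would dispose of the building blocks. For Seifert fibered rational homology spheres the equivalence is a theorem --- on the Floer side via the Lisca--Stipsicz classification of Seifert fibered L-spaces, and on the orderability side via Boyer--Rolfsen--Wiest combined with the Eisenbud--Hirsch--Neumann analysis of horizontal foliations --- and for Sol manifolds and large classes of graph manifolds it is also known (Boyer--Clay; Hanselman--Rasmussen--Rasmussen--Watson). So the real content is (a) the atoroidal hyperbolic case and (b) splicing hyperbolic and Seifert pieces along tori.

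For the gluing step I would route everything through boundary slopes. On the Floer side, a manifold \(Y\) with torus boundary is either Floer simple or not; when it is, its set of L-space filling slopes is a closed interval whose endpoints are detected by Turaev torsion, and the gluing theorem of Rasmussen--Rasmussen (and Hanselman--Rasmussen--Rasmussen--Watson) says a toroidal filling is an L-space exactly when the two such intervals together cover the circle of slopes. One then wants the mirror statement for orderability: \(\pi_1\) of the glued manifold is non-left-orderable iff the ``non-left-orderable slope'' sets of the two sides cover the circle. Since there is no Mayer--Vietoris for orderings, I would not argue with \(\pi_1\) directly but pass to co-oriented taut foliations, using Boyer--Clay's gluing of foliations across tori and the Calegari--Dunfield universal-circle construction to recover a left-ordering, so that the conjecture for the glued manifold reduces to the assertion that, for each Floer simple piece, the non-L-space filling slopes coincide with the ``foliation-detected'' slopes. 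This last assertion is precisely the kind of topological characterisation of Floer simple manifolds that the present paper establishes (building on Rasmussen--Rasmussen), which is why a result like ours is the natural engine for the toroidal case.

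The main obstacle --- and the reason this is still a conjecture --- is the atoroidal hyperbolic case, where at present there is no bridge at all between \(\hfhat(Y)\) and \(\pi_1(Y)\): there is neither a gluing reduction to exploit nor a canonical geometric object (a taut foliation, or a circular order on the fundamental group) that is known to exist exactly when \(Y\) is not an L-space. Short of producing a new invariant interpolating between Heegaard Floer homology and the fundamental group, the realistic scope of the methods developed here is the Floer simple, or generalized solid torus, setting, which is what this paper actually carries out.
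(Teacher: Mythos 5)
The statement you were asked to prove is the Boyer--Gordon--Watson L-space conjecture. The paper does not prove it --- it is stated explicitly as a conjecture, cited to \cite{lofg}, and serves only as motivation for the Dehn-filling results that follow --- so there is no proof of the paper's to compare yours against, and no proof should have been expected. Your submission is, accordingly, not a proof but a survey of the known partial results together with a reduction strategy, and to your credit you say as much: you correctly identify that the atoroidal hyperbolic case is completely open and that no bridge between \(\widehat{HF}(Y)\) and \(\pi_1(Y)\) is currently known there. That self-diagnosis is accurate and is the genuine gap.

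Two further points on the toroidal reduction you sketch, since that is the part you present as if it were close to working. First, the step where you need ``for each Floer simple piece, the non-L-space filling slopes coincide with the foliation-detected (or order-detected) slopes'' is not something this paper establishes, nor is it a consequence of it: the paper's main theorem characterises the manifolds for which \emph{every} non-longitudinal filling is an L-space (the generalized solid tori), which is a statement entirely on the Floer side; it says nothing about foliations or orderings. The identification of NLS-detected, foliation-detected and order-detected slopes is itself a family of conjectures (due to Boyer--Clay), so your gluing argument assumes a statement essentially equivalent in strength to the case of the conjecture you are trying to reduce. Second, not every piece of a JSJ decomposition of a rational homology sphere has Floer simple complement, so even granting the slope-detection conjectures, the interval-covering criterion you invoke does not apply to arbitrary toroidal gluings. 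In short: the proposal is an honest and reasonably well-informed account of why the conjecture is believed and where it stands, but it is not, and does not claim to be, a proof.
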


A lot of recent work in the subject has gone into studying this conjecture. We follow a path taken by Rasmussen and Rasmussen in \cite{ras}
who approach the problem from the perspective of Dehn filling. More precisely, if \(Y\) is a rational homology \(S^1 \times D^2\), we will be interested in classifying the L-space fillings of \(Y\), i.e. the set
\[\lL(Y) = \{ \alpha \in Sl(Y) \, | \, Y(\alpha) \ \text{is an L-space}\}\]
where \(Sl(Y)\) is the set of filling slopes on \(\partial Y\). In particular, we will study the case \({\lL(Y) = Sl(Y) - \{l\}}\), where \(l\) is the homological longitude. 

In \cite{ras}, Rasmussen and Rasmussen define a subset \(\dtgz(Y) \subset H_1(Y)\) that (almost) completely determines the set \(\lL(Y)\). We strengthen a result of theirs about the behaviour of manifolds with \(\lL(Y) = Sl(Y) - \{l\}\).

\begin{theorem}
\label{thm:maintheorem}
The following are equivalent
\begin{enumerate}
\item  \(\lL(Y) = Sl(Y) - \{l\}\).
\item \(Y\) has genus \(0\) and has an L-space filling. 
\end{enumerate}
\end{theorem}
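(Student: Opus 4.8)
The plan is to route everything through the set $\dtgz(Y)\subset H_1(Y)$ of Rasmussen and Rasmussen and its relationship to the genus of $Y$. Two preliminary remarks frame the problem. First, since $Y$ is a rational homology $S^1\times D^2$, the filling $Y(l)$ along the homological longitude has $b_1=1$, so $l\notin\lL(Y)$ unconditionally; thus (1) is precisely the assertion that $l$ is the \emph{only} slope omitted from $\lL(Y)$. Second, $Sl(Y)$ is infinite, so (1) automatically supplies an L-space filling; hence for (1)$\Rightarrow$(2) only the genus assertion needs proof, while for (2)$\Rightarrow$(1) both hypotheses are in hand.

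For (2)$\Rightarrow$(1), the first task is to promote ``genus $0$ plus one L-space filling'' to ``$Y$ is Floer simple'', since the Rasmussen--Rasmussen correspondence presupposes this. I expect the genus-$0$ hypothesis to pin down the $\Z$-grading on $H_1(Y)$ underlying $\mathcal{D}^\tau$ so rigidly that, together with the existence of an L-space filling, the associated L-space interval is forced to have nonempty interior, which is one of the equivalent formulations of Floer simplicity. Granting this, genus $0$ should translate directly into $\dtgz(Y)=\emptyset$ (the genus being recoverable from $\dtge(Y)$ as the top value of the relevant grading). Feeding $\dtgz(Y)=\emptyset$ back through the correspondence then leaves no obstruction on any slope transverse to $l$, giving $\lL(Y)\supseteq Sl(Y)-\{l\}$, and hence equality.

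For (1)$\Rightarrow$(2) I would argue the genus statement by contraposition: if $g(Y)\ge 1$ then, by the same grading dictionary, $\dtgz(Y)\neq\emptyset$, and pushing this nonempty set through the Rasmussen--Rasmussen correspondence should produce a slope $\alpha\neq l$ (indeed a short family of slopes accumulating at $l$) with $Y(\alpha)$ not an L-space, contradicting (1).

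The real obstacle is the word ``almost'': $\dtgz(Y)$ determines $\lL(Y)$ only up to finitely many slopes sitting immediately next to $l$, so I must rule out the degenerate scenario in which $\dtgz(Y)=\emptyset$ yet some slope adjacent to $l$ still fails to be an L-space filling (and, dually, $g(Y)\ge 1$ yet $l$ is the unique omitted slope). Settling this amounts to understanding the half-open end of the L-space interval at $l$ precisely, showing it behaves as in the model case of the genuine solid torus, and disposing separately of the small or degenerate cases (e.g.\ $Y$ itself a solid torus, or numerology arising from exceptional Seifert fibers) where the counting collapses. I would also want to verify that the Floer-simplicity promotion in (2)$\Rightarrow$(1) does not covertly require two L-space fillings at the outset; if it does, I would replace it by a direct argument that genus $0$ plus one L-space filling already forces $rk\,\hfhat(Y(\alpha))=|H_1(Y(\alpha))|$ for every $\alpha\neq l$, via a Dehn-filling and mapping-cone computation anchored on the planar rational Seifert surface guaranteed by genus $0$.
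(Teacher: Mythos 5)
Your skeleton is the right one---both the paper and your proposal funnel everything through \cite[Theorem 1.6]{ras} and the set $\dtgz(Y)$---but there is a genuine gap at exactly the step you dismiss as a ``grading dictionary.'' The claim that $g(Y)\ge 1$ forces $\dtgz(Y)\neq\emptyset$ (equivalently, that $\dtgz(Y)=\emptyset$ forces genus $0$) is not a formal consequence of the definitions: the genus is read off from the Turaev torsion via $\deg\Deltas(Y)$, not as ``the top value of the relevant grading'' on $\dtge(Y)$. The implication $\deg\Deltas(Y)<g_Y\Rightarrow\dtgz(Y)=\emptyset$ is the easy one (\cite[Proposition 7.1]{ras}); the converse is precisely the main technical theorem of this paper (Theorem \ref{4equivs}). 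A priori the support of $\tau(Y)$ could have large gaps in the $\phi$-grading, making $\deg\Deltas(Y)$ large, without any of the resulting differences $x-y$ landing in the image of the peripheral subgroup, so $\dtgz(Y)$ could still be empty. Ruling this out is where all the work lives: Turaev's polynomial-part and surgery lemmas, the divisibility of the truncated series $Q_{s,k}(t)$ by $t^{g_Y}-1$, and the resulting rigidity of the coefficient polynomials $q_{i,s}$, culminating in $\bar{\tau}(Y)\sim\bar{\tau}_0$ and hence $\deg\Deltas(Y)<g_Y$. Your proposal supplies no argument for this step.

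Two smaller points. The ``almost'' you spend your last paragraph worrying about is a non-issue: \cite[Theorem 1.6]{ras} states that for Floer simple $Y$ with $i^{-1}(\dtgz(Y))=\emptyset$ one has $\lL(Y)=Sl(Y)-\{l\}$ on the nose, and when $\dtgz(Y)\neq\emptyset$ the set $\lL(Y)$ is a closed interval, which can never equal the non-closed set $Sl(Y)-\{l\}$; the word ``almost'' in the introduction refers only to needing one L-space filling to decide \emph{which} interval $\lL(Y)$ is, not to ambiguous slopes adjacent to $l$. Conversely, the promotion of ``genus $0$ plus one L-space filling'' to Floer simplicity, which you rightly flag, is supplied by \cite[Proposition 1.9]{ras} (the paper's corollary is explicitly an extension of that proposition), so it should be cited rather than replaced by the mapping-cone argument you sketch.
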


To prove this theorem, we show that for any \(Y\),  \(\dtgz(Y) = \emptyset\) if and only if \(Y\) is a \emph{generalized solid torus}.
This is closely related to the notion of a \emph{solid torus-like} manifold, introduced by Hanselman and Watson in \cite{watson}, which is a condition on the bordered Floer homology of the manifold. In particular, all solid torus-like manifolds are generalized solid tori, and if \(H_1(Y)\) is torsion-free the notions are equivalent.

This also allows us to simplify a result of Hanselman, Rasmussen, Rasmussen and Watson (\cite[Theorem 7]{hrrw}) concerning \emph{splicing} of manifolds. Given two manifolds with torus boundary, \(Y_1\) and \(Y_2\), and a map \(\varphi : \partial Y_1 \rightarrow \partial Y_2\), we can define the spliced manifold \(Y_\varphi\) by gluing \(Y_1\) and \(Y_2\) along their boundaries, by the map \(\varphi\).

\begin{cor}
Suppose \(Y_1\) and \(Y_2\) as above are Floer simple and boundary incompressible. Let \(\mathcal{L}^{\circ}_i\) be the interior of \(\mathcal{L}(Y_i) \subset Sl(Y_i)\). Then \(Y_\varphi\) is an L-space if and only if \(\varphi_*(\mathcal{L}^{\circ}_1) \cup \mathcal{L}^{\circ}_2 = Sl(Y_2)\). 
\end{cor}

As another application of this technology, we give a large class of twisted torus knots in \(S^1 \times S^2\) that have generalised solid torus complements. In \cite{vafaee}, Vafaee gives a class of twisted torus knots in \(S^3\) that are L-space knots, which is extended by Motegi in \cite{motegi}. We will show how these results can be applied to the study of twisted torus knots in \(S^1 \times S^2\) with Floer simple complements, and extend the classification.

\begin{theorem}
The twisted torus knot complement \(S^1 \times S^2 \setminus T(p,q;s,r)\) is a generalized solid torus if \((p,q) = 1\) and \(s \equiv \pm q \; (p)\).
\end{theorem}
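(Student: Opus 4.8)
By Theorem~\ref{thm:maintheorem}, together with the fact proved in this paper that a manifold $Y$ is a generalized solid torus exactly when \(\dtgz(Y)=\emptyset\), it suffices to show that \(Y := (S^1\times S^2)\setminus N(T(p,q;s,r))\) is Floer simple, has genus \(0\), and admits an L-space filling; then \(\lL(Y)=Sl(Y)-\{l\}\), so \(\dtgz(Y)=\emptyset\) and \(Y\) is a generalized solid torus. The plan is to extract all three facts from the structure that the hypothesis \(s\equiv\pm q\ (p)\) imposes on the twisting, using the work of Vafaee \cite{vafaee} and Motegi \cite{motegi} on the analogous knots in \(S^3\).

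\textbf{The twisting description.} First I would place \(T(p,q)\) on the genus-one Heegaard torus \(\Sigma\) of \(S^1\times S^2=V_1\cup_\Sigma V_2\); since \((p,q)=1\) this is an embedded curve. The twisting circle \(c\) bounds a disk in \(V_1\) meeting \(T(p,q)\) in \(s\) points; it is unknotted and nullhomologous in \(S^1\times S^2\), so \((-1/r)\)-surgery on \(c\) returns \(S^1\times S^2\) and carries \(T(p,q)\) to \(T(p,q;s,r)\). Hence \(Y\) is obtained from the fixed link complement \(X=(S^1\times S^2)\setminus N(T(p,q)\cup c)\) by a single Dehn filling of the \(c\)-cusp, and the slopes \(Sl(Y)\) are the fillings of the remaining cusp of this partially filled \(X\). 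The behaviour of \(\lL(Y)\) is thus governed entirely by surgeries on the link \(T(p,q)\cup c\).

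\textbf{The main step: using \(s\equiv\pm q\ (p)\).} Here I would import the argument of Vafaee and Motegi: under this congruence the circle \(c\) is isotopic, in the complement of \(T(p,q)\), to a fibre of the Seifert fibration in which \(T(p,q)\) is a regular fibre, so that \(T(p,q)\cup c\) is a Seifert link (the cases \(s\equiv+q\) and \(s\equiv-q\) are interchanged by mirroring). Consequently \((-1/r)\)-surgery on \(c\) preserves the Seifert structure, \(T(p,q;s,r)\) remains a fibre, and \(Y\) is Seifert fibred over the disk with at most two exceptional fibres. From this I would read off the three required properties. (i) \(Y\) is Floer simple: Seifert fibred spaces over the disk with \(\le 2\) exceptional fibres are Floer simple by the results quoted from \cite{ras} and \cite{hrrw}. (ii) \(Y\) has an L-space filling: filling along the fibre slope gives a connected sum of lens spaces, which is an L-space; alternatively one exhibits fillings that are Seifert fibred over \(S^2\) with \(\le 3\) exceptional fibres and Seifert data of a single sign, hence L-spaces. (iii) \(Y\) has genus \(0\): the horizontal (multisection) surface of the fibration over the disk, equivalently the Heegaard annulus \(\Sigma\setminus N(T(p,q;s,r))\) pushed into \(Y\), is planar and represents (a multiple of) the homological longitude, so the genus of \(Y\) is \(0\).

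\textbf{Conclusion and main obstacle.} With \(Y\) Floer simple, of genus \(0\), and admitting an L-space filling, Theorem~\ref{thm:maintheorem} yields \(\lL(Y)=Sl(Y)-\{l\}\), hence \(\dtgz(Y)=\emptyset\), hence \(Y\) is a generalized solid torus. I expect the real work to be in the main step: verifying that \(c\) is genuinely isotopic to a Seifert fibre under the sole hypothesis \(s\equiv\pm q\ (p)\) --- equivalently, recognising \(T(p,q;s,r)\) in \(S^1\times S^2\) as a fibre or Heegaard-torus curve --- and checking that the Seifert structure on \(X\) transforms under the \((-1/r)\)-filling on \(c\) exactly as in the \(S^3\) setting of Vafaee and Motegi, so that the classification of the resulting small Seifert pieces (and the identification of the longitudinal slope) is transparent; the remaining verifications are bookkeeping with Seifert invariants and the cited machinery.
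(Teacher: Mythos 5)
Your high-level strategy is the same as the paper's: show that \(Y = S^1\times S^2\setminus T(p,q;s,r)\) has genus \(0\) and admits an L-space filling, invoke Theorem \ref{thm:maintheorem} to get \(\lL(Y)=Sl(Y)-\{l\}\), and conclude via Theorem \ref{4equivs} that \(Y\) is a generalized solid torus. However, the step you yourself flag as ``the real work'' is false, and it fails for a reason specific to \(S^1\times S^2\). The twisting circle \(c\) bounds a disk meeting \(T(p,q)\) in \(s\) points, so \([c]=0\) in \(H_1(S^1\times S^2)\cong\mathbb{Z}\); but every fibre of a Seifert fibration of \(S^1\times S^2\) having \(T(p,q)\) as a regular fibre is homologically essential (a regular fibre is isotopic to \(T(p,q)\), which is nonzero in homology since its complement is a rational homology solid torus, and an exceptional fibre is rationally proportional to it). So \(c\) is not isotopic to a Seifert fibre and \(T(p,q)\cup c\) is not a Seifert link. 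The \(S^3\) intuition you are importing --- that \(s\equiv\pm q \; (p)\) lets you isotope \(c\) onto the Heegaard torus as a \((1,n)\)-curve meeting \(T(p,q)\) in \(|q-np|\) points --- breaks here because \((1,n)\)-curves on the Heegaard torus of \(S^1\times S^2\) are homologically essential; the only nullhomologous essential curves on that torus are meridians, which meet \(T(p,q)\) in \(p\) points. Your conclusion that \(Y\) is Seifert fibred over the disk with at most two exceptional fibres is contradicted by the paper's own key example: for \(p=5\), \(q=s=2\) (so \(s\equiv q \; (5)\)) the complements \(S^1\times S^2\setminus T(5,2;2,r)\) are hyperbolic for all but finitely many \(r\), which is exactly what the paper exploits to answer the Rasmussen--Rasmussen torsion question. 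With the Seifert structure gone, all three of your inputs (Floer simplicity, the L-space filling, and the genus-\(0\) bound) are left unsupported.

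The paper supplies these inputs by entirely different means. Genus \(0\) comes from an explicit inductive construction of a planar spanning surface in the complement of a three-component link (Lemma \ref{thm:sphere}); this is the technical heart of the proof and cannot be replaced by an appeal to a horizontal surface of a fibration that does not exist. The L-space fillings come from viewing \(Y\) as the filling \(Y_{1/r,0,-}\) of that link complement: the \(\infty\)-filling of the twisting component gives \(S^1\times S^2\setminus T(p,q)\), already known to be a generalized solid torus by Corollary \ref{thm:vafaeeExtension}, so each \(Y_{\infty,0,r_3}\) is an L-space; Theorem \ref{thm:maintheorem} applied to the two-cusped manifold \(Y_{-,0,r_3}\) (which has genus \(0\) by the surface lemma) then shows every \(Y_{r_1,0,r_3}\) with \(r_1\neq 0\), \(r_3\neq\infty\) is an L-space, and applying the theorem once more to \(Y_{1/r,0,-}\) gives the result. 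As written, your proposal does not prove the theorem.
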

We can use this to produce an set of examples to resolve a question asked in \cite{ras}.
\begin{cor}
There exists an infinite set of distinct Floer simple manifolds with the same Turaev torsion.
\end{cor}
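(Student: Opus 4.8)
The plan is to extract the family from the twisted torus knot complements produced by the preceding theorem. Fix $(p,q)=1$ with $q\neq 0$ and an integer $s$ with $s\equiv\pm q\ (p)$, and for each integer $r$ put
\[
Y_r \;=\; S^1\times S^2\setminus T(p,q;s,r).
\]
By the theorem $Y_r$ is a generalized solid torus; these complements are moreover Floer simple, and by Theorem~\ref{thm:maintheorem} each satisfies $\mathcal L(Y_r)=Sl(Y_r)-\{l\}$ and has genus $0$. It therefore remains to show (i) that the Turaev torsion of $Y_r$ does not depend on $r$ once homologies are identified, and (ii) that infinitely many of the $Y_r$ are pairwise non-homeomorphic.

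For (i), recall that $T(p,q;s,r)$ is obtained from the torus knot $T(p,q)=T(p,q;s,0)$ by $\mp\tfrac{1}{r}$-surgery on a small circle $c$ encircling $s$ strands; since $c$ is disjoint from a tubular neighbourhood of the knot, the boundary torus $\partial Y_r$ together with its meridian and a chosen longitude is canonically identified for every $r$. The winding number of the knot in $S^1\times S^2$ is a fixed nonzero integer --- that of $T(p,q)$ on the Heegaard torus, unchanged by twisting --- so an elementary homology computation shows that $H_1(Y_r)$ and the homological longitude $l$ are the same for all $r$. Finally, for a genus-$0$ Floer simple rational homology solid torus the Turaev torsion, and indeed $\hfhat$ of every Dehn filling, are determined by the pair $(H_1(Y),l)$: genus $0$ leaves no room for $\tau(Y)$ to carry further information, as one sees from the Rasmussen--Rasmussen description of $\dtgz(Y)$ in terms of $\tau(Y)$. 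Hence $\tau(Y_r)$ is a single element of the relevant group, the same for every $r$.

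For (ii), write $X = S^1\times S^2\setminus\bigl(T(p,q)\cup c\bigr)$, a two-cusped manifold of which $Y_r$ is the $\mp\tfrac{1}{r}$-filling along the cusp of $c$. For suitable $p,q,s$ the manifold $X$ is hyperbolic; checking this amounts to ruling out reducing spheres, essential tori, and a Seifert fibration of $X$, just as for twisted torus links in $S^3$, the only new ingredient being that the ambient manifold is $S^1\times S^2$ rather than $S^3$. Granting hyperbolicity of $X$, Thurston's hyperbolic Dehn surgery theorem shows $Y_r$ is hyperbolic for all but finitely many $r$, with $\mathrm{vol}(Y_r)<\mathrm{vol}(X)$ and $\mathrm{vol}(Y_r)\to\mathrm{vol}(X)$ as $|r|\to\infty$, so the $Y_r$ realise infinitely many distinct volumes and are pairwise non-homeomorphic. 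Combined with (i) this produces an infinite set of distinct Floer simple manifolds with the same Turaev torsion, answering the question of \cite{ras}.

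The main obstacle I anticipate is the hyperbolicity of $X$ in step (ii): one must carry out the incompressible-surface analysis directly inside $S^1\times S^2\setminus\bigl(T(p,q)\cup c\bigr)$. Should no admissible choice of $(p,q,s)$ yield a hyperbolic $X$, one can instead separate the $Y_r$ by a coarser invariant --- for instance by producing, for infinitely many $r$, distinct finite quotients of $\pi_1(Y_r)$, or via a Casson--Walker-type computation --- none of which is affected by the coincidence of Turaev torsions, precisely because that invariant was shown in (i) to be independent of $r$.
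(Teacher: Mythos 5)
Your overall strategy is the same as the paper's: take the twisted torus knot complements $S^1\times S^2\setminus T(p,q;s,r)$ (equivalently, $1/r$-fillings of one cusp of the two-cusped manifold $Y_{\_,0,\_}$), use the generalized solid torus property to control the torsion, and use Thurston's hyperbolic Dehn filling theorem to distinguish infinitely many of the fillings by volume. However, there are two genuine gaps. First, in step (i) you claim that genus $0$ ``leaves no room for $\tau(Y)$ to carry further information,'' so that $\tau(Y_r)$ is literally independent of $r$. That is not what the generalized solid torus condition gives you: the argument of Theorem \ref{4equivs} only pins down the Milnor torsion $\bar{\tau}(Y)\sim\bar{\tau}_0$, i.e.\ the image of $\tau(Y)$ in $\mathbb{Z}[H_1(Y)/T]$, equivalently $\overline{\Delta}(Y)$. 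The full torsion $\sum_{s}s\sum_{i}q_{i,s}(\sigma)t^i$ still contains undetermined ``transition'' polynomials $q_{i,s}(\sigma)$ with values involving the torsion subgroup, so it is not a single element determined by $(H_1(Y),l)$, and your intermediate claim is false as stated. The paper closes exactly this gap by pigeonhole: for a Floer simple manifold only finitely many torsions are compatible with a given $\overline{\Delta}$, so an infinite family with constant $\overline{\Delta}$ contains an infinite subfamily with constant $\tau$. Your write-up needs that step.

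Second, step (ii) is not actually carried out. You say ``for suitable $p,q,s$ the manifold $X$ is hyperbolic'' and describe what a proof would require, but you never exhibit such parameters or verify hyperbolicity, and you acknowledge this as the main obstacle. This is the load-bearing input to Thurston's theorem, and the paper supplies it concretely: for $(p,q)= (5,2)$, $s=2$ the relevant link complement is checked to be hyperbolic with SnapPy, after which the volume argument goes through. Your proposed fallbacks (finite quotients of $\pi_1$, a Casson--Walker computation) are likewise unexecuted, so the distinctness of infinitely many $Y_r$ remains unproved in your version.
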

While studying this problem we also come across an interesting class of two component link complements.

\begin{theorem}
\label{thm:inffam}
There exists an infinite family of manifolds \(Z_{p,q}\) with \(\partial Z_{p,q} = T^2 \cup T^2\) such that any filling \(Z_{p,q}(\alpha, \beta)\) with \(b_1(Z_{p,q}(\alpha, \beta)) = 0\) is an L-space.
\end{theorem}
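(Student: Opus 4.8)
The plan is to exhibit \(Z_{p,q}\), for coprime integers \(p>q\ge 2\), as the complement in \(S^1\times S^2\) of the two–component link \(T(p,q)\cup c\), where \(T(p,q)\) is the \((p,q)\)–curve on a genus–one Heegaard torus and \(c\) is the twisting circle encircling \(q\) adjacent parallel strands of \(T(p,q)\); then \((-1/r)\)–filling along \(\partial\nu(c)\) returns \(S^1\times S^2\setminus T(p,q;q,r)\) for every \(r\in\Z\), and the twisting parameter \(s=q\) satisfies \(s\equiv q\pmod p\). A homological computation gives \(b_1(Z_{p,q})=2\) with \(H_1(Z_{p,q})\) torsion free (so generalized solid torus and solid torus–like agree for its one–cusped fillings), and shows that \(c\) is null–homologous in \(S^1\times S^2\) while every slope on \(\partial\nu(c)\) is homologically essential in \(Z_{p,q}\). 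Finally one records that distinct \((p,q)\) give non-homeomorphic \(Z_{p,q}\), for instance by distinguishing their JSJ data or the homeomorphism types of the fillings produced below.

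The decisive step is to show that \(Z_{p,q}(\beta)\) is a \emph{generalized solid torus} for \emph{every} slope \(\beta\) on \(\partial\nu(c)\) for which it is a rational homology solid torus. For the infinitely many slopes \(\beta=-1/r\), \(r\in\Z\), this is immediate from the theorem on twisted torus knot complements, since \(Z_{p,q}(-1/r)=S^1\times S^2\setminus T(p,q;q,r)\) with \((p,q)=1\) and \(q\equiv q\pmod p\). For the remaining slopes, which change the ambient manifold away from \(S^1\times S^2\), I would argue directly that \(\dtgz(Z_{p,q}(\beta))=\emptyset\): the exterior \(Z_{p,q}\) has an explicit description (as a curve in a torus knot exterior, equivalently via a two–component surgery diagram in \(S^3\)) from which the bordered Floer / immersed–curve invariant of each filling is computable, and one reads off that \(\dtgz\) is empty whenever \(b_1(Z_{p,q}(\beta))=1\). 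Granting this, the equivalence \(\dtgz(Y)=\emptyset\iff Y\) is a generalized solid torus, combined with Theorem \ref{thm:maintheorem} (a generalized solid torus has genus \(0\) and carries an L-space filling), shows that each
\[
Y_\beta:=Z_{p,q}(\beta)
\]
is a Floer simple rational homology solid torus with \(\lL(Y_\beta)=Sl(Y_\beta)-\{l_\beta\}\), where \(l_\beta\) is its homological longitude.

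With this in hand the theorem follows by a short homological argument. Let \((\alpha,\beta)\) be any pair with \(b_1(Z_{p,q}(\alpha,\beta))=0\). Since a Dehn filling drops \(b_1\) by at most \(1\) and \(b_1(Z_{p,q})=2\), we get \(b_1(Z_{p,q}(\beta))=1\), so \(Z_{p,q}(\beta)=Y_\beta\) is a rational homology solid torus and \(\lL(Y_\beta)=Sl(Y_\beta)-\{l_\beta\}\). Now \(Z_{p,q}(\alpha,\beta)=Y_\beta(\alpha)\); filling \(Y_\beta\) along \(l_\beta\) leaves \(b_1=1\) (the homological longitude is torsion in \(H_1(Y_\beta)\)), so \(b_1(Z_{p,q}(\alpha,\beta))=0\) forces \(\alpha\ne l_\beta\), whence \(\alpha\in\lL(Y_\beta)\) and \(Z_{p,q}(\alpha,\beta)=Y_\beta(\alpha)\) is an L-space. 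The main obstacle is precisely the decisive step of the second paragraph: the theorem on twisted torus knot complements is naturally a statement about the torus knot \(T(p,q)\) and not about the twisting circle \(c\), so the real work is to promote ``\(Z_{p,q}(\beta)\) is a generalized solid torus'' from the integral twisting slopes \(-1/r\) to all slopes on \(\partial\nu(c)\); for that one wants the explicit computation of \(\dtgz\) of the link exterior and its one–cusped fillings, after which the argument reduces to the established dictionary between generalized solid tori and the equality \(\lL(Y)=Sl(Y)-\{l\}\) together with bookkeeping of \(b_1\).
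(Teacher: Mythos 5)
You have identified the paper's family exactly ($Z_{p,q}=Y_{\_,0,\_}$, the exterior in $S^1\times S^2$ of $T(p,q)$ together with the twisting circle $c$), and your closing $b_1$ bookkeeping is correct, but there is a genuine gap at the step you yourself flag as decisive. You fill the cusp at $c$ first and then need $Z_{p,q}(\beta)$ to be a generalized solid torus for \emph{every} slope $\beta$ with $b_1(Z_{p,q}(\beta))=1$. Theorem \ref{thm:ttkAreGST} covers only the slopes $\beta=-1/r$, for which the filling is again a twisted torus knot exterior in $S^1\times S^2$; for all other $\beta$ the ambient closed manifold changes, and you offer only the assertion that $\dtgz(Z_{p,q}(\beta))$ ``is computable and one reads off that it is empty.'' No such computation is given, and none is routine: $\dtgz$ is defined through the Turaev torsion (equivalently the bordered invariant) of the filled manifold, and neither this paper nor the literature supplies a mechanism for propagating $\dtgz=\emptyset$ (or even Floer simplicity) across all fillings of the other cusp. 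As written, nothing rules out a non-integral $\beta$ for which $Z_{p,q}(\beta)$ fails to be Floer simple or has $\dtgz\neq\emptyset$. This is the entire content of the theorem, not a detail to be deferred.

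The paper closes this by filling in the opposite order, which lets Theorem \ref{thm:maintheorem} do all the work with a single explicit surface. Fix an arbitrary filling $r_3$ of the torus-knot cusp $L_3$ and regard $Y_{-,0,r_3}$ as a one-cusped manifold with cusp at $c=L_1$. The genus-$0$ surface of Lemma \ref{thm:sphere} has boundary only on $L_1$ and on the $0$-filled component $L_2$, hence is disjoint from $L_3$ and survives \emph{every} choice of $r_3$; so $Y_{-,0,r_3}$ has genus $0$ for all $r_3$ at once. One L-space filling comes for free: the $\infty$-filling of $c$ yields the $r_3$-filling of $S^1\times S^2\setminus T(p,q)$, a generalized solid torus by Corollary \ref{thm:vafaeeExtension}. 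Theorem \ref{thm:maintheorem} then gives $\lL(Y_{-,0,r_3})=Sl(Y_{-,0,r_3})-\{l\}$ with $l$ the slope $r_1=0$, and the $b_1$ count (the same as yours) finishes. To salvage your order of filling you would need, for each $\beta$, both a genus-$0$ surface with boundary on the torus-knot cusp and an L-space filling of $Z_{p,q}(\beta)$ --- considerably more than the one surface Lemma \ref{thm:sphere} provides.
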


\subsection*{Acknowledgements:} The author would like to thank his supervisor Jacob Rasmussen for introducing this problem, and for support and helpful 
conversations throughout.

\section{Generalized solid tori}

First, we will fix some conventions and notation. Throughout, \(Y\) will be a rational homology \(S^1 \times D^2\). Let \(\lambda\) be the homological longitude and \(\mu\) a meridian of \(\partial Y\). Write \(H_1(Y) = \mathbb{Z} \oplus T\), \(i : H_1(\partial Y) \to H_1(Y)\) for the induced inclusion map. Then 
\[
{Im(i) =  g_Y \mathbb{Z} \oplus \mathbb{Z}/g_Y\mathbb{Z} = g_Y \mathbb{Z} \oplus T' \subset H_1(Y)}
\]
where \(i(\mu)\) generates \(g_Y \mathbb{Z}\) and \(\sigma := i(\lambda)\) generates \(T'\).

Fix a projection of the free part \(\phi:H_1(Y) \to \Z\) and let \(t\) be a generator of \(H_1(Y) / T\). Finally, let \(\iota : H_1(Y) \to H_1(Y(\lambda)) = \mathbb{Z} \oplus T^* \cong \mathbb{Z} \oplus T/T'\) and \(k_Y = |T^*|\).

Let \(\tau(Y)\) be the Turaev torsion, which is an element of \((1-t)^{-1}\mathbb{Z}[H_1(Y)] \subset Q(\mathbb{Z}[H_1(Y)]\), where \(t\) generates the free part of \(H_1(Y)\).
By writing \((1-t)^{-1} = \sum_{i=0}^\infty t^i\), we can consider \(\tau(Y)\) as an element of the Novikov ring
\[
 \Lambda_\phi[H_1(Y)] = \Big\{ \sum_{h \in H_1(Y)} a_h [h] \ \Big| \ \#\{h \, | \, a_h \neq 0, \phi(h)<k\}<\infty \ \text{for all} \ k\Big\}.
\]
i.e. as a formal sum of elements in \(H_1(Y)\). We will normalise \(\tau(Y)\) so that \(a_h = 0\) for \(\varphi(H) < 0\) and \(a_0 \neq 0\).
With this representation, we can define \({S[\tau(Y)] = \{h \in H_1(Y) \, | \, a_h\neq 0\}}\), the {\em support} of \(\tau(Y)\).
We say that \(Y\) is \emph{Floer simple} if \(\lL(Y)\) is an interval inside \(Sl(Y)\).

Finally, we define an analogue of the Alexander polynomial. Let \(\Phi :  \Lambda_\phi[H_1(Y)] \rightarrow \mathbb{Z}[t, t{-1}]\) be the map induce from \(\varphi\), then set \(\overline{\Delta}(Y) = (1-t) \Phi(\tau(Y))\).

\begin{definition}[{\cite[Definition 1.5]{ras}}]
If \(Y\) is a Floer simple manifold, we define 
\[ \dtge(Y) = \{x-y\,|\, x \notin S[\tau(Y)], y \in S[\tau(Y)],\phi(x)\geq \phi(y)\} \cap \mathrm{im} \, \iota \subset H_1(Y)\]
and write 
\(\dtgz(Y)\) for the subset of \(\dtge(Y)\) consisting of those elements with \(\phi(h)>0\). 
\end{definition}

\begin{figure}
\begin{center}
  \begin{tabular}{ | l | l | }
    \hline
    \(Y\) & a rational homology \(S^1 \times D^2\) \\ \hline
    \(\lambda\) & a homological longitude of \(\partial Y\) \\ \hline
    \(\mu\) & a meridian of \(\partial Y\) \\ \hline
    \(T\) & the torsion part of \(H_1(Y)\) \\ \hline
    \(\phi:H_1(Y) \to \Z\) & a (fixed) projection of the free part of \(H_1(Y)\) \\ \hline
    \(t\) & a generator of the free part of \(H_1(Y)\) \\ \hline
    \(i : H_1(\partial Y) \to H_1(Y)\) & the induced inclusion map \\ \hline
    \(\sigma := i(\lambda)\) & the inclusion of the longitude\\ \hline
    \(g_Y := ord(\sigma)\) & \\ \hline
    \(\iota : H_1(Y) \to H_1(Y(\lambda))\) & the induced inclusion map from the \(\lambda\) filling \\ \hline
    \(T^*\) & the torsion part of \(H_1(Y(\lambda))\) \\ \hline
    \(k_Y := |T^*|\) & \\ \hline
    \(\tau(Y) \in \Lambda_\phi[H_1(Y)]\) & the Turaev torsion \\ \hline
    \(\overline{\Delta}(Y)\) & an analogue of the Alexander polynomial \\ \hline
  \end{tabular}
\end{center}
\caption{Conventions and notation}
\end{figure}

The significance of this definition is demonstrated by \cite[Theorem 1.6]{ras}, which says that if \(Y\) is Floer simple then either \(i^{-1}(\dtgz(Y))\) is empty, in which case \(\lL(Y) = Sl(Y) - \{l\}\), or it is nonempty and \(\lL(Y)\) is a closed interval whose endpoints are consecutive elements of \(\dtgz(Y)\). I.e. the set \(\dtgz(Y)\) (and hence the Turaev torsion) along with one L-space filling, completely determines \(\lL(Y)\).

Finally, we need to define the notion of a \emph{generalized solid torus}.

\begin{definition}
 A {\em generalized solid torus} is a Floer simple manifold \(Y\) with \(\deg \Deltas(Y) < g_Y\). 
\end{definition}

This definition is motivated by the fact that if \(Y\) is Floer simple and \(\deg \Deltas(Y) < g_Y\), then \(\dtgz(Y) = \emptyset\) (\cite[Proposition 7.1]{ras}). This can be compared to the notion of solid torus-like manifolds introduced by Hanselman and Watson in \cite{watson}, which we will define and discuss later.

Clearly all solid torus-like manifold are also generalised solid tori. We will show later that they also must have \(g_Y = 1\), which (combined with our main theorem) greatly restricts their form.

We are now in a position to prove our main theorem. The primary improvement over the proof of Rasmussen and Rasmussen in \cite{ras} is a more careful treatment of the Turaev torsion, using lemma \ref{thm:torsionlemma}, which allows us to more precisely control the behaviour of the polynomials \(q_{i,s}\) defined below.

\begin{theorem}
\label{4equivs}
Suppose that \(Y\) is a Floer simple rational homology \(S^1 \times D^2\) and \(\mathcal{D}_{> 0}^{\tau} = \emptyset\). Then \(Y\) is a generalized solid torus.
\end{theorem}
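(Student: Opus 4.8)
The plan is to prove the contrapositive: if \(Y\) is Floer simple but \(\deg \Deltas(Y) \geq g_Y\), then \(\dtgz(Y) \neq \emptyset\). Since a generalized solid torus is by definition a Floer simple manifold with \(\deg \Deltas(Y) < g_Y\), and Floer simplicity is part of the hypothesis, this suffices; the reverse implication \(\deg \Deltas(Y) < g_Y \Rightarrow \dtgz(Y) = \emptyset\) is \cite[Proposition 7.1]{ras}, so this is the substantive direction.

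First I would unwind the condition \(\dtgz(Y) = \emptyset\) into a closure property of the support. Writing \(H_1(Y) = \mathbb{Z}\langle t\rangle \oplus T\) and \(\mathrm{im}\, i = g_Y\mathbb{Z}\langle t\rangle \oplus T'\), the set \(\dtgz(Y)\) is empty precisely when: for every \(h \in S[\tau(Y)]\) and every \(z \in \mathrm{im}\, i\) with \(\phi(z) > 0\), also \(h + z \in S[\tau(Y)]\); that is, \(S[\tau(Y)]\) is stable under adding \((g_Y k, s)\) for all \(k \geq 1\) and \(s \in T'\). Organising \(\tau(Y)\) into the polynomials \(q_{i,s}(t)\), which record its coefficients along the cosets of \(\mathrm{im}\, i\) in \(H_1(Y)\), this says that along each coset the support is empty below some starting grading and is \emph{completely} occupied once one goes \(g_Y\) above that grading. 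Conversely, a nonempty coset that starts at grading \(\geq g_Y\) gives, directly, an element \(h \in S[\tau(Y)]\) low in that coset together with an unoccupied \(h'\) exactly \(g_Y\) below it, so that \(h' - h \in \mathrm{im}\, i\) has positive grading and lies in \(\dtgz(Y)\) — a contradiction.

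So the theorem reduces to showing that, under \(\dtgz(Y) = \emptyset\), every nonempty coset starts at grading \(< g_Y\), and that this forces \(\deg \Deltas(Y) < g_Y\). For the second part, recall \(\Deltas(Y) = (1-t)\Phi(\tau(Y))\) and \(\Phi(\tau(Y)) = \sum_n c_n t^n\) with \(c_n = \sum_{\phi(h) = n} a_h\), so \(\deg \Deltas(Y)\) is the largest \(n\) with \(c_n \neq c_{n-1}\). Once every nonempty coset has ``saturated'' — reached grading (its start) \(+\, g_Y\) — the \(c_n\) are constant, so \(\deg \Deltas(Y)\) is controlled by the largest coset starting grading, up to a bounded correction from the single partially-occupied first grading of each coset. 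For the first part I would use the normalisation of \(\tau(Y)\) (support in \(\phi \geq 0\) and \(a_0 \neq 0\), hence \(0 \in S[\tau(Y)]\), so the coset through \(0\) starts at grading \(0\)) together with the finiteness of \((1-t)\tau(Y) \in \mathbb{Z}[H_1(Y)]\), which makes the graded pieces of \(\tau(Y)\) eventually periodic up to a shift, to propagate low starting gradings to all the remaining cosets.

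The main obstacle is the passage from the support to the degree — the second part above. Emptiness of \(\dtgz(Y)\) is a statement purely about \emph{which} group elements lie in \(S[\tau(Y)]\), whereas \(\deg \Deltas(Y)\) depends on the alternating sums \(c_n - c_{n-1}\) of the \emph{values} of the coefficients \(a_h\); for \(g_Y > 1\) the supports at high grading are unions of full \(T'\)-cosets carrying coefficients of mixed sign, so the \(c_n\) can a priori fail to stabilise even after the support has stabilised, through cancellation between torsion classes at a common grading. This is exactly what Lemma \ref{thm:torsionlemma} is needed for: it should pin down the coefficients along each \(q_{i,s}\) in its occupied range (for instance that they are eventually constant), so that stabilisation of the support does force stabilisation of the \(c_n\). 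The remaining work — checking that no \(q_{i,s}\) can switch on at grading \(\geq g_Y\), and the attendant bookkeeping — I expect to be routine once this coefficient control is available.
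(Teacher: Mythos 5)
Your setup is faithful to the paper's: the translation of \(\dtgz(Y)=\emptyset\) into the closure property of \(S[\tau(Y)]\) under adding positive elements of \(\mathrm{im}\,i\), and the resulting ``\(0,\dots,0,\text{partial},\text{full},\text{full},\dots\)'' shape of each coset, is exactly how the paper begins. But both halves of the reduction you then propose are flawed. First, the claimed ``direct contradiction'' when a coset starts at grading \(\geq g_Y\) is wrong: with \(h\in S[\tau(Y)]\) at the starting grading and \(h'\notin S[\tau(Y)]\) exactly \(g_Y\) below it, the difference \(h'-h\) has \(\phi(h'-h)=-g_Y<0\), and the pair \((x,y)=(h',h)\) violates the requirement \(\phi(x)\geq\phi(y)\) in the definition of \(\dtge(Y)\). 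In fact \(\dtgz(Y)=\emptyset\) places no constraint whatsoever on where a coset starts, only on how it fills in once started. Second, even if every nonempty coset did start at grading \(<g_Y\), this would not force \(\deg \Deltas(Y)<g_Y\): take \(g_Y=2\) and \(T^*\) trivial, with the residue-\(0\) coset supported on one of its two elements at grading \(0\) and full from grading \(2\) on, and the residue-\(1\) coset supported on one element at grading \(1\) and full from grading \(3\) on. This support satisfies the closure property and both cosets start below \(g_Y\), yet \(c=(1,1,2,2,\dots)\) gives \(\Deltas=1+t^2\), of degree \(g_Y\). What rules such configurations out is not the support condition but the torsion identities: the divisibility \((t^{g_Y}-1)\mid Q_s(t)\) extracted from Lemma \ref{thm:torsionlemma}, which forces the transition level \(n_{k,s}\) and the transition value to be independent of \(s\in T^*\); then \cite[Lemma 7.3]{ras}, which forces the transition values across the residue classes \(k\) modulo \(g_Y\) to follow the arithmetic progression \(k_Y(k+c)\); and finally the argument with \(F(Q)=p((1-t)Q)\) showing that all residue classes are shifted by the same amount, so that \(\bar\tau(Y)\sim\bar\tau_0\). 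None of this cross-coset coordination appears in your sketch beyond the expectation that Lemma \ref{thm:torsionlemma} ``should pin down the coefficients,'' and it is precisely where the content of the theorem lies.

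A smaller point: your worry about coefficients ``of mixed sign'' is moot, since Floer simplicity already forces every coefficient of \(\tau(Y)\) to be \(0\) or \(1\) (the paper invokes this at the outset). Consequently the \(c_n\) automatically stabilise once the support does; the issue is not whether they stabilise but whether they stop changing before degree \(g_Y\), and that is exactly what the coordination described above is needed for.
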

\begin{proof}

Let \(\Sigma_{Y} = \sum_{s \in T} s\) and \(\Sigma_{Y(\lambda)} = \sum_{s \in T^*} s\). We will need some notation and two lemmas from \cite{turaev}.

\begin{lemma} \cite[II.4.5]{turaev} \label{thm:polynomialpart}
When \(b_1(Y) = 1\) we can define the \emph{polynomial part} of the torsion
\[ [\tau](Y) = 
\begin{cases}
\tau(Y) + (t-1)^{-1} Q_Y(t) \Sigma_{Y} & \text{ if } \partial Y = S^1 \times S^2 \\
\tau(Y) + (t-1)^{-1} (t^{-1}-1)^{-1} Q_Y(t) \Sigma_{Y} & \text{ if } \partial Y = \emptyset 
\end{cases}
 \]
for some polynomial \(Q_Y(t)\). Then \([\tau](Y) \in (\frac{1}{2} \mathbb{Z})[H_1(Y)]\), i.e. \([\tau](Y)\) is a polynomial.
\end{lemma}

\begin{lemma}\cite[VII.1.5]{turaev} \label{thm:torsionlemma}
With \(Y\) as above,
\[\iota([\tau(Y)]) = \pm (t^{g_Y} - 1)[\tau](Y(\lambda)) + \Sigma_{Y(\lambda)} P_Y(t) \]
for some polynomial \(P_Y(t)\).
\end{lemma}

Write

\[\tau(Y) = \sum_{s \in T^*} s \sum_{i=0}^{\infty} q_{i, s}(\sigma) t^i \]

where, since \(Y\) is Floer simple, all of the coefficients of \(q_{i, s}(\sigma)\) are 1 or 0. Then we can use Turaev's lemma to show that

\begin{align*}
\iota(\tau(Y)) &=  \sum_{s \in T^*} s \sum_{i=0}^{\infty} q_{i, s}(1) t^i  \\
&= \iota([\tau](Y)) + g_Y \Sigma_{Y(\lambda)} \sum_{i=C}^{\infty} t^i \\
&= \pm (t^{g_Y} - 1)[\tau](Y(\lambda)) + \Sigma_{Y(\lambda)} (P_Y(t) +  g_Y \sum_{i=C}^{\infty} t^i)
\end{align*}

So, by taking coefficients of the \(s \in T^*\)

\[(t^{g_Y} - 1) \; \mid \; Q_s(t) := \sum_{i=0}^{\infty} q_{i, s}(1) t^i - P_Y(t) -   g_Y \sum_{i=C}^{\infty} t^i\]

Since \(\mathcal{D}_{>0}^\tau = \emptyset\), if the coefficient (in \(\tau\)) of some \(s \sigma^a t^k\) is 1, then the coefficient of \(s \sigma^{a'} t^{k + n g_Y}\) must also be 1 for all \(a'\) and \(k > 0\). So there is at most one value of \(n\) such that the polynomial \(q_{k + n g_Y, s}(\sigma)\) is not either 0 or \(\Sigma_{Y(\lambda)}\). 
 Therefore, the subsequence \(q_{k + n g_Y, s}(1)\) must look like \(0, 0, ..., 0, q_{k + n_{k, s} g_Y, s}(1), g_Y, g_Y, ...\)

We now cut up \(Q_s(t)\) into groups of powers of \(t\) that correspond to equivalence classes in \(\mathbb{Z}/g_Y \mathbb{Z}\), i.e.

\[Q_{s,k}(t) := \sum_{n=0}^{\infty} q_{k + n g_Y, s}(1)  t^{k + n g_Y} - P_{Y,k}(t) -  g_Y  \sum_{n = C_k}^{\infty} t^{k +  n g_Y}\]

where \(C_k = \ceil{(C - k)/g_Y}\) and \(P_{Y,k}(t)\) contains the terms within \(P_Y(t)\) of the form \(c t^{k + n g_Y}\). Clearly this process preserves divisibility by \(t^{g_Y} - 1\), and so we also have divisibility of the following differences (wlog \(n_{k, s'} \geq n_{k, s}\))

\begin{equation*}
\begin{split}
(t^{g_Y} &- 1) \; \mid \; (Q_{s, k}(t) - Q_{s', k}(t)) \\
&= q_{k+n_{k,s} g_Y, s}(1) t^{k+n_{k,s} g_Y} + g_Y \sum_{n = n_{k, s} + 1}^{n_{k, s'} - 1} t^{k +  ng} + (g_Y - q_{k+n_{k,s'} g_Y, s'}(1)) t^{k+n_{k,s'} g_Y}
\end{split}
\end{equation*}

from which we can conclude that \(n_{k, s'} = n_{k, s}\) and \(q_{k + n g_Y, s}(1) = q_{k + n g_Y, s'}(1)\) for all \(s, s'\). So if we write \(a_i = q_{i, s}(1)\), we get a simple form for the \emph{Milnor torsion}

\[\bar{\tau}(Y) = pr( \sum_{s \in T^*} s \sum_{i=0}^{\infty} q_{i, s}(\sigma) t^i ) = k_Y \sum_{i=0}^{\infty} a_i t^i \]

where \(pr : \mathbb{Z}[H_1(Y)] \to \mathbb{Z}[H_1(Y)/T]\). 

The rest of the proof proceeds analogously to \cite{ras}, but we outline it here for clarity and completeness.

\begin{lemma}
There is a constant c so that \(\sum\limits_{i \equiv k \; (g_Y)} k_Y a_i \equiv k_Y(k + c) \; (g_Y)\).
\end{lemma}
\begin{proof}
By \cite[Lemma 7.3]{ras}, \(p(\bar{\Delta}(t)) = p((t-1)\bar{\tau}(Y)) \sim k_Y (1 - t^{g_Y}) / (1 - t)\) where \(p : \mathbb{Z}[t] \to \mathbb{Z}[t] / (t^{g_Y} - 1)\), so \(\bar{\Delta}(t)\)
can be obtained from \(k_Y (1 - t^{g_Y})\) by moves of the form \(f(t) \to f(t) + t^i - t^{g_Y-i}\) and \(f(t) \to t^c f(t)\). It is then sufficient to note that if \(f(t)/(1-t) = \sum_{i=0}^{\infty} a_i t^i\)
satisfies the conditions of the lemma, so do \(w f(t) + t^i - t^{g_Y-i}\) and \(t^c f(t)\).
\end{proof}

This implies that the subsequence (\(k_Y a_{k + n g_Y}\)) has the form 
\[0, 0, ..., 0, k_Y k, k_Y g_Y, k_Y g_Y, ....\]

so \(\bar{\tau}(Y)\) is obtained from 

\[\bar{\tau}_0 = k_Y(t + 2t^2 + ... + (g_Y - 1)t^{g_Y-1} + g_Y t^{g_Y} + g_Y t^{g_Y + 1} + ...) = k_Y \frac{t^{g_Y} - 1}{(t-1)^2}\]

by elementary shifts of the form \(k_Y(a t^i + (g_Y - a)t^{i+g_Y})\).

Finally, consider the map \(F(Q(t)) = p((1-t)Q(t))\),  \(p : \mathbb{Z}[t] \to \mathbb{Z}[t] / (t^{g_Y} - 1)\). One can check that if \(f(t) - g(t) = k_Y(a t^i + (g_Y - a)t^{i+g_Y})\), \(F(f) - F(g) = g_Y k_Y(t^i - t^{i+1})\).
But since \(\bar{\tau}(Y)\) is obtained from \(\bar{\tau}_0\) by these shifts, and since \(F(\bar{\tau_0}) = k_Y \frac{t^{g_Y} - 1}{(t-1)} = F(\bar{\tau}(Y))\), all residue classes must be shifted by the same amount, i.e. \(\bar{\tau}(Y) \sim \bar{\tau}_0\). Hence \(Y\) is a generalised solid torus.

\end{proof}

We have the following corollary, which follows immediately as an extension of \cite[Proposition 1.9]{ras} (and which contains Theorem \ref{thm:maintheorem}).

\begin{cor}
The following are equivalent
\begin{enumerate}
\item  \(\lL(Y) = Sl(Y) - \{l\}\).
\item \(Y\) is Floer simple and \(\dtgz(Y) = \emptyset\).
\item \(Y\) is Floer simple and has genus \(0\). 
\item \(Y\) has genus \(0\) and has an L-space filling.
\end{enumerate}
\end{cor}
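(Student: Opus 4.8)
The plan is to assemble the four-fold equivalence from results already in hand, observing first that conditions (2) and (3) are two descriptions of the same object: by definition a generalized solid torus is a Floer simple \(Y\) with \(\deg \Deltas(Y) < g_Y\), and for Floer simple manifolds the inequality \(\deg \Deltas(Y) < g_Y\) is exactly the condition ``\(Y\) has genus \(0\)'' built into the notion of genus. So it suffices to establish \((1)\Leftrightarrow(2)\), \((2)\Leftrightarrow(3)\), and \((3)\Leftrightarrow(4)\).

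For \((1)\Leftrightarrow(2)\) I would invoke \cite[Theorem 1.6]{ras}. If \(Y\) is Floer simple with \(\dtgz(Y) = \emptyset\), then \(i^{-1}(\dtgz(Y)) = \emptyset\) and that theorem gives \(\lL(Y) = Sl(Y) - \{l\}\); conversely, \(Sl(Y) - \{l\}\) being an interval shows \(Y\) is Floer simple, and were \(\dtgz(Y)\) nonempty the same theorem would force \(\lL(Y)\) to be a proper closed sub-interval of \(Sl(Y)\) with endpoints among the slopes cut out by \(\dtgz(Y)\), contradicting (1). The one point to handle with care is the passage between \(\dtgz(Y) \subset H_1(Y)\) and \(i^{-1}(\dtgz(Y)) \subset H_1(\partial Y)\), but this compatibility is part of the setup in \cite{ras}. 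For \((2)\Leftrightarrow(3)\): the implication \((3)\Rightarrow(2)\) is \cite[Proposition 7.1]{ras}, which says a Floer simple manifold with \(\deg \Deltas(Y) < g_Y\) has \(\dtgz(Y) = \emptyset\), and \((2)\Rightarrow(3)\) is precisely our Theorem \ref{4equivs}, which upgrades ``Floer simple and \(\dtgz(Y) = \emptyset\)'' to ``generalized solid torus'', i.e. \(\deg \Deltas(Y) < g_Y\).

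It remains to treat \((3)\Leftrightarrow(4)\). The forward direction is immediate: assuming (3), the chain \((3)\Rightarrow(2)\Rightarrow(1)\) gives \(\lL(Y) = Sl(Y) - \{l\} \neq \emptyset\), so \(Y\) has an L-space filling, and it has genus \(0\) by hypothesis. The reverse direction \((4)\Rightarrow(3)\) is the substantive one and is where \cite[Proposition 1.9]{ras} is used: one must show that a genus-\(0\) rational homology \(S^1\times D^2\) possessing even a single L-space filling is automatically Floer simple. The mechanism is that the genus-\(0\) hypothesis constrains \(\Deltas(Y)\), and hence \(S[\tau(Y)]\), independently of any Floer-simplicity assumption, making the Turaev torsion rigid enough that one L-space filling propagates to an entire interval of L-space fillings; once \(Y\) is Floer simple, it satisfies (3).

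I expect this last step, \((4)\Rightarrow(3)\), to be the main obstacle: removing the Floer-simplicity hypothesis requires the rigidity coming from genus \(0\), not any recombination of the torsion computations in Theorem \ref{4equivs}. Everything else is bookkeeping stitching together \cite[Theorem 1.6]{ras}, \cite[Proposition 7.1]{ras}, and Theorem \ref{4equivs}; the genuinely new input of the corollary over \cite[Proposition 1.9]{ras} is merely that the equivalence \((2)\Leftrightarrow(3)\) lets its conclusion be stated in the cleaner genus-\(0\) form, which in turn yields Theorem \ref{thm:maintheorem}.
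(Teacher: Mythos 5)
Your proposal is correct and matches the paper's (essentially unstated) argument: the paper proves this corollary in one line as "an extension of [Proposition 1.9, ras]" via Theorem \ref{4equivs}, and your assembly from \cite[Theorem 1.6]{ras}, \cite[Proposition 7.1]{ras}, \cite[Proposition 1.9]{ras} and Theorem \ref{4equivs} is exactly the intended decomposition, with the new content being \((2)\Rightarrow(3)\). The only slight imprecision is cosmetic: the equivalence of ``genus \(0\)'' with \(\deg\Deltas(Y) < g_Y\) for Floer simple manifolds is a theorem (the torsion detects the Thurston norm), not something ``built into the notion of genus,'' but you correctly route the actual implication through the cited results anyway.
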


This immediately gives us a result about solid torus-like manifolds, which we define now.

\begin{definition}[{\cite[Definition 3.23]{watson}}]
A loop-type manifold $M$ is solid torus like if it is a rational homology solid torus and every loop in the representation of $\widehat{CFD}(M,\alpha,\beta)$ is solid torus-like.  Equivalently, \(\dtge(Y) = \emptyset\).
\end{definition}

\begin{cor}
\label{cor:stlIsBc}
If \(Y\) is solid torus-like, then it is boundary compressible.
\end{cor}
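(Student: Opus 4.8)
The plan is to translate the bordered-Floer condition ``solid torus-like'' --- which for a loop-type rational homology solid torus is exactly $\dtge(Y)=\emptyset$ --- into a genuine compressing disk, by first extracting enough rigidity of the Turaev torsion and then running an innermost-disk argument on the longitudinal filling $Y(\lambda)$.

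First I would unwind the hypothesis. A solid torus-like $Y$ is loop-type, hence Floer simple, and has $\dtge(Y)=\emptyset$, so a fortiori $\dtgz(Y)\subseteq\dtge(Y)=\emptyset$; by the corollary to Theorem~\ref{4equivs}, $Y$ is then a generalized solid torus with $\lL(Y)=Sl(Y)-\{l\}$, and the proof of that theorem makes available the normal form $\bar{\tau}(Y)\sim k_Y(t^{g_Y}-1)/(t-1)^2$ together with the bookkeeping that, for each residue $k$ and each coset representative $s$, the polynomials $q_{k+ng_Y,s}(\sigma)$ are each $0$ or the full $T'$-sum apart from a single jump index $n_{k,s}$. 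The extra content of $\dtge(Y)=\emptyset$ over $\dtgz(Y)=\emptyset$ is that it forbids differences $x-y\in\mathrm{im}\,i$ with $\phi(x)=\phi(y)$; unwrapped, this says that within each $\phi$-level $S[\tau(Y)]$ is a union of cosets of $\langle\sigma\rangle=T'$, which upgrades every exceptional $q_{k+n_{k,s}g_Y,s}(\sigma)$ to $0$ or the full $T'$-sum as well. Rerunning the divisibility argument of Theorem~\ref{4equivs} over $\mathbb{Z}[H_1(Y)]$ rather than over its projection then shows that $\tau(Y)$ is the $\iota$-pullback of $\tau(Y(\lambda))$ times $1+\sigma+\dots+\sigma^{g_Y-1}$, and in particular that $g_Y=1$ (alternatively, one may invoke directly the later fact that solid torus-like manifolds have $g_Y=1$).

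Finally I would argue topologically. Since $\dtge(Y)=\emptyset$, the filling $Y(\lambda)$ is as simple as a $b_1=1$ manifold can be, so by the detection of the Thurston norm by $\hfhat$ it contains a non-separating $2$-sphere $S$, i.e. $Y(\lambda)\cong W\#(S^1\times S^2)$. Because $g_Y=1$, the core $K$ of the filling solid torus $V$ generates $H_1(Y(\lambda))$ modulo torsion, so the algebraic intersection of $[S]$ with $[K]$ is $\pm1$; in particular $S$ meets $V$, and since $b_1(Y)=1$ it cannot be isotoped off $V$ (otherwise $Y$ would split off an $S^1\times S^2$ leaving a piece with torus boundary and $b_1=0$, impossible by half-lives-half-dies). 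Isotoping $S$ so that $S\cap V$ consists of meridian disks of least number and tubing off cancelling adjacent pairs along balls in $V$ leaves a single meridian disk, whence $S\cap Y$ is a properly embedded disk with boundary the slope $\lambda$ on $\partial Y$ --- a compressing disk. I expect the middle step to be the main obstacle: forcing $g_Y=1$ (and the pullback form of $\tau(Y)$) really does require the full group-ring bookkeeping, since the Milnor-torsion normal form and the coset condition alone give only $g_Y\mid k_Y$; and one must make sure the passage from $\dtge(Y)=\emptyset$ to reducibility of $Y(\lambda)$ is genuinely licensed, either by the explicit form of $\tau(Y)$ or by the appropriate $b_1=1$ L-space statement.
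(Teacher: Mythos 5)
Your skeleton agrees with the paper's: deduce $\dtgz(Y)=\emptyset$ (hence genus $0$) and $g_Y=1$ from $\dtge(Y)=\emptyset$, then exhibit a compressing disk. But both steps you flag as delicate are done more directly in the paper, and your substitute for the last one has a genuine gap. On $g_Y=1$: your worry that the Milnor-torsion normal form only gives $g_Y\mid k_Y$ is unfounded, and no extra group-ring bookkeeping over $\mathbb{Z}[H_1(Y)]$ is needed. Your own coset observation is exactly the point: $\dtge(Y)=\emptyset$ forces every nonzero $q_{i,s}(\sigma)$ to be the full sum over $T'$, so $\bar{\Delta}(t)$ is a single monomial $\pm k_Y g_Y t^c$; comparing with $p(\bar{\Delta})\sim k_Y(1-t^{g_Y})/(1-t)$ from \cite[Lemma 7.3]{ras}, a monomial can only be equivalent to $k_Y(1+t+\cdots+t^{g_Y-1})$ in $\mathbb{Z}[t]/(t^{g_Y}-1)$ when $g_Y=1$. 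That is the whole argument.

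The real gap is in your topological endgame. The paper needs no detour through $Y(\lambda)$: once $g_Y=1$, the genus-$0$ generator $\Sigma$ of $H_2(Y,\partial Y)$ already has boundary a single copy of $\lambda$, so it is a properly embedded disk with essential boundary, i.e.\ a compressing disk. Your route --- take a non-separating sphere $S\subset Y(\lambda)$ with $[S]\cdot[K]=\pm1$ and reduce $S\cap V$ to one meridian disk by ``tubing off cancelling adjacent pairs'' --- does not work as stated: replacing two oppositely oriented meridian disks of $S$ by the boundary-parallel annulus between them drops $\chi$ by $2$, so the modified surface is a torus (or disconnects), not a sphere; and it is false in general that an embedded sphere can be isotoped to meet a knot in its algebraic intersection number (this is the hard content of results of Gabai/Scharlemann, not an innermost-disk exercise). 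So you cannot conclude that $S\cap Y$ is a single disk by this move. Replace the last paragraph of your argument with the direct observation about $\Sigma$ and the proof closes.
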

\begin{proof}
A solid torus-like manifold has \(\dtge(Y) = \emptyset\), and so \(\dtgz(Y) = \emptyset\). Hence there is a generator \(\Sigma\) of \(H_2(Y, \partial Y)\) with genus 0.

Also, since \(\dtge(Y) = \emptyset\), we must have that \(q_{i, s}(\sigma) = \sum_{i=0}^{g_Y} \sigma^i\) for all \(i\) and \(s\), and so \(\bar{\Delta}(t) = k_Y g_Y \sum_{i=0}^\infty t^i\). Therefore \(g_Y = 1\) by \cite[Lemma 7.3]{ras}.
So \(\Sigma\) is a compressing disk for \(Y\).
\end{proof}

We will now turn our attention to an application to \emph{splicing} of manifolds. Suppose that \(Y_1\) and \(Y_2\) are rational homology solid tori, \(\varphi : \partial Y_1 \to \partial Y_2\) is an orientation reversing diffeomorphism.
We say that \(Y_\varphi = Y_1 \cup_\varphi Y_2\) is obtained by splicing \(Y_1\) and \(Y_2\) together along \(\varphi\).

\begin{cor}
Suppose \(Y_1\) and \(Y_2\) as above are Floer simple and boundary incompressible. Let \(\mathcal{L}^{\circ}_i\) be the interior of \(\mathcal{L}(Y_i) \subset Sl(Y_i)\). Then \(Y_\varphi\) is an L-space if and only if \(\varphi_*(\mathcal{L}^{\circ}_1) \cup \mathcal{L}^{\circ}_2 = Sl(Y_2)\). 
\end{cor}
\begin{proof}
By \ref{cor:stlIsBc} the \(Y_i\) are not solid torus-like, and we can apply \cite[Theorem 7]{hrrw}.
\end{proof}

\section{Twisted Torus Knots}
In an effort to find examples of generalised solid tori, we now turn our attention to twisted torus knots. We write \(T(p,q;s,r)\) for the knot in figure \ref{fig:ttk}, considered as a knot in \(S^1 \times S^2\).

\begin{figure}[ht]
    \centering
    \def\svgwidth{\columnwidth}
    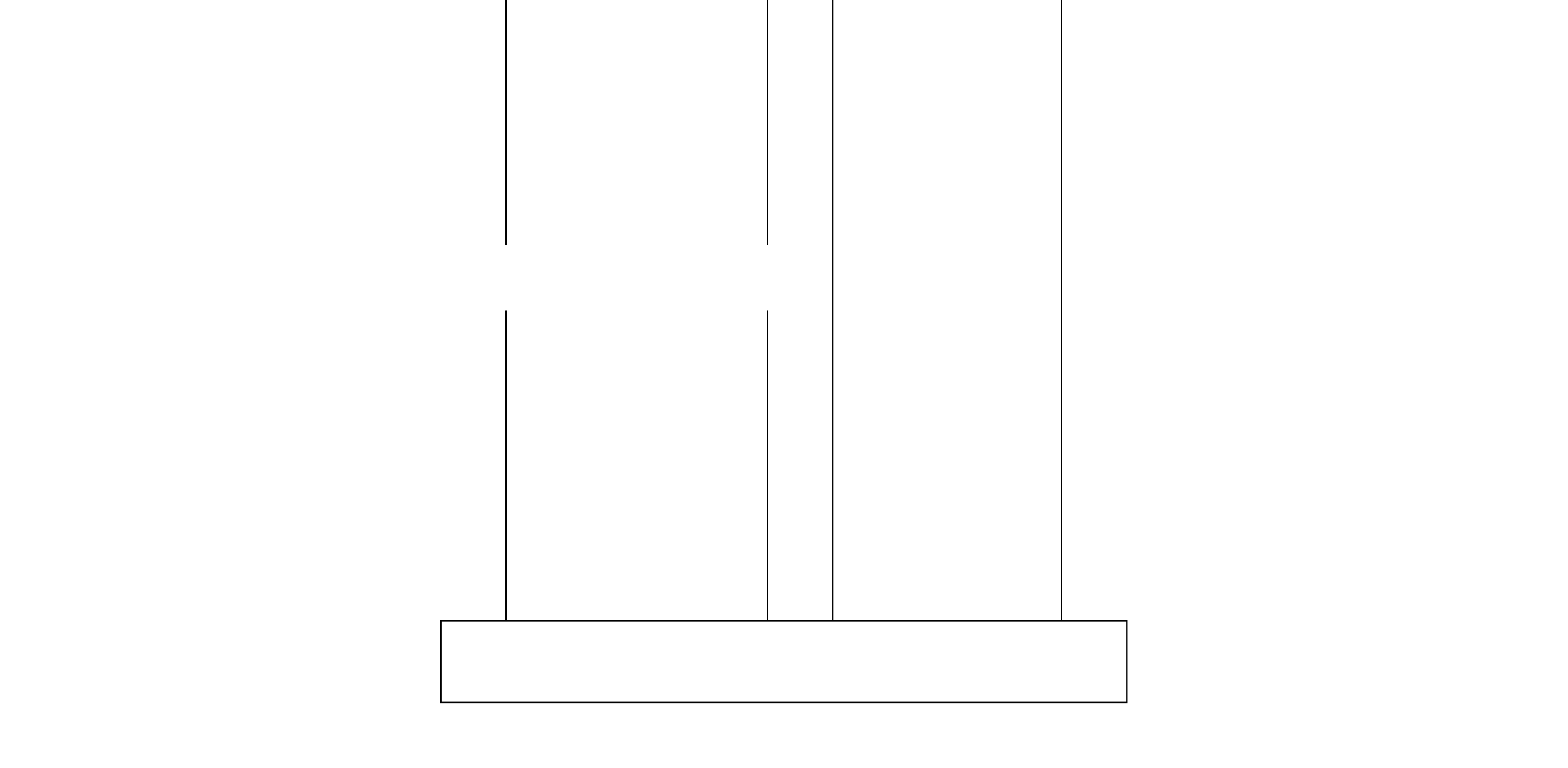
    \caption{The twisted torus knot \(T(p,q;s,r)\)}
    \label{fig:ttk}
\end{figure}

In \cite{vafaee}, Vafaee studies twisted torus knots \(K(p,q;s,r) \subset S^3\), and classifies as subset that have L-space fillings proving

\begin{theorem}[{\cite[Theorem 1]{vafaee}}]
For \( p \geq 2, k \geq 1, r > 0\) and \(0 < s < p\), the twisted torus knot, \(K(p, k p \pm 1; s, r)\), is an L-space knot if and only if either \(s = p - 1\) or \(s \in \{2, p-2\}\) and \(r = 1\).
\end{theorem}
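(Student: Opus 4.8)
The plan is to work through the standard staircase characterisation of L-space knots: a knot $K\subset S^3$ is an L-space knot if and only if $\Delta_K(t)=\sum_{j=0}^{2k}(-1)^j t^{n_j}$ for a strictly decreasing symmetric sequence $n_0>n_1>\cdots>n_{2k}$, equivalently $\widehat{HFK}(S^3,K,i)$ has rank at most $1$ in every Alexander grading $i$; in particular the coefficient of $t^{g(K)-1}$ in $\Delta_K$ must have absolute value at most $1$. So the argument splits into a construction for the ``if'' direction and an obstruction for the ``only if'' direction, and the real content is to identify the top two coefficients of $\Delta_K$ for $K=K(p,kp\pm1;s,r)$.

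First I would fix a workable model: present $K$ as the closure of the positive braid $(\sigma_1\cdots\sigma_{p-1})^{kp\pm1}(\sigma_1\cdots\sigma_{s-1})^{rs}$ on $p$ strands, equivalently as $(-1/r)$--surgery on an unknot $c$ that bounds a disc meeting $T(p,kp\pm1)$ in $s$ points. Since the braid is positive, $K$ is fibred, so $\widehat{HFK}(S^3,K,g(K))$ already has rank $1$ and the genus is computed by Seifert's algorithm; and the congruence $q\equiv\pm1\pmod p$ is precisely what makes the exterior of $T(p,kp\pm1)\cup c$, hence the surgered manifold, a graph manifold with a controlled JSJ decomposition into Seifert-fibred pieces. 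For the ``if'' direction I would treat $s=p-1$ and the cases $s\in\{2,p-2\},\ r=1$ separately. When $s=p-1$ the twisting is, up to isotopy, a cabling/Berge-type operation turning $T(p,kp\pm1)$ into an iterated torus knot or a doubly primitive knot on the genus-$2$ Heegaard surface, so $K$ inherits a lens space --- in particular an L-space --- surgery; alternatively one invokes the cabling theorems of Hedden and Hom together with the fact that torus knots are L-space knots. The cases $s\in\{2,p-2\}$ with $r=1$ are the ``one-clasp'' modifications and can either be matched with known L-space knots or checked directly to have staircase Alexander polynomial.

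The main obstacle is the ``only if'' direction: assuming $s\ne p-1$ and not ($s\in\{2,p-2\}$ with $r=1$), I must show $K$ is \emph{not} an L-space knot. The plan is to compute $\widehat{HFK}(S^3,K,g(K)-1)$ --- equivalently the next-to-top coefficient of $\Delta_K$ --- and show it has rank at least $2$. Concretely I would build a genus-$2$ Heegaard diagram adapted to the braid word (or run the mapping-cone formula for $(-1/r)$--surgery on $c$ inside the exterior of $T(p,kp\pm1)$) and count the generators in the two extreme Alexander gradings; the combinatorics make the near-extremal count proportional to $\min(s-1,\,p-1-s)$, and proportional to $r$ when $s\in\{2,p-2\}$, so it exceeds $1$ exactly outside the stated parameter range. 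The genuine difficulty is not exhibiting these generators but controlling their Maslov gradings and signs well enough to guarantee the extra rank survives to $\widehat{HFK}$ rather than cancelling; one hedge is to replace the diagram count by the graph-manifold picture from the second step and apply the splicing criterion (the Corollary above, from \cite{hrrw}), reducing to computing the L-space intervals $\mathcal{L}^{\circ}_i$ of the Seifert pieces from their Seifert invariants and verifying that $\varphi_*(\mathcal{L}^{\circ}_1)\cup\mathcal{L}^{\circ}_2$ is all of $Sl$ only in the claimed cases. Either way the theorem comes down to the same arithmetic case analysis on $(p,q,s,r)$.
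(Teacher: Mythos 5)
First, note that the paper does not prove this statement at all: it is quoted verbatim from Vafaee's paper and used as a black box to deduce Corollary \ref{thm:vafaeeExtension}, so there is no internal proof to compare your attempt against. Judged on its own terms, your write-up is a plan rather than a proof, and the two load-bearing steps are exactly the ones left open. For the ``only if'' direction, everything rests on the claim that the rank of $\widehat{HFK}$ in Alexander grading $g(K)-1$ (equivalently the next-to-top coefficient of $\Delta_K$) is ``proportional to $\min(s-1,p-1-s)$, and proportional to $r$ when $s\in\{2,p-2\}$''; you do not establish this, and you yourself flag that the real difficulty is showing the extra generators survive in homology rather than cancelling. That computation \emph{is} the theorem --- without it the argument is circular. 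This is precisely the content that Vafaee has to work for, using the special structure available when $q\equiv\pm1\pmod p$, and nothing in your sketch substitutes for it.

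Your proposed hedge also fails for a concrete reason: it is not true that $q=kp\pm1$ makes the exterior of $T(p,kp\pm1)\cup c$ (with $c$ a disc boundary meeting the knot in $s$ points) a graph manifold. For $2\le s\le p-2$ that link exterior is generically hyperbolic --- indeed the knots $K(p,kp\pm1;s,r)$ obtained by filling it include infinitely many hyperbolic knots, which could not arise as fillings of a graph manifold --- so there is no JSJ decomposition into Seifert pieces to which the splicing criterion of \cite{hrrw} could be applied, and in any case that criterion requires Floer simple, boundary-incompressible pieces that you have not exhibited. (The situation where the twisting circle can be isotoped onto the Heegaard torus is governed by a congruence on $s$, not on $q$; that is the mechanism exploited elsewhere in this paper for $s\equiv\pm q\pmod p$.) Similarly, in the ``if'' direction the knots $K(p,kp\pm1;p-1,r)$ are not in general iterated torus knots, so the cabling theorems of Hedden and Hom do not apply to them; one must instead identify them with knots already known to admit L-space (e.g.\ lens space) surgeries or verify the staircase condition directly. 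So while the overall skeleton (staircase criterion plus a case analysis on $(p,s,r)$) matches the shape of Vafaee's argument, the proposal as written has genuine gaps at both the construction and the obstruction.
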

This theorem can be used to prove the existence class of generalised solid tori as complements of knots in \(S^1 \times S^2\).
\begin{cor}
\label{thm:vafaeeExtension}
For \( p \geq 2, k \geq 1, r > 0\) and \(0 < s < p\), the complement of the twisted torus knot, \(T(p, k p \pm 1; s, r) \subset S^1 \times S^2\), is a generalised solid torus if either \(s \in \{1, p - 1\}\) or \(s \in \{2, p-2\}\) and \(r = 1\).
\end{cor}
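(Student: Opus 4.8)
I would split the statement into two regimes. Suppose first that $s\in\{1,p-1\}$. Since $q=kp\pm1$ we have $q\equiv\pm1\pmod p$, so for $0<s<p$ the hypothesis $s\in\{1,p-1\}$ is exactly the congruence $s\equiv\pm q\pmod p$, and $\gcd(p,kp\pm1)=1$. Hence this case is a direct instance of the twisted torus knot theorem proved above (the statement that $S^1\times S^2\setminus T(p,q;s,r)$ is a generalized solid torus whenever $(p,q)=1$ and $s\equiv\pm q\pmod p$), and there is nothing further to do. Consistently, twisting $s=1$ strand is trivial, so $T(p,kp\pm1;1,r)$ is just the torus curve $T(p,kp\pm1)$, which is isotopic in $S^1\times S^2$ to a $(p,\pm1)$-curve and hence has solid torus complement.

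The real content is the case $s\in\{2,p-2\}$, $r=1$, where the congruence fails as soon as $p\geq4$. The diagram in Figure~\ref{fig:ttk} presents $T(p,q;s,r)\subset S^1\times S^2$ and the classical twisted torus knot $K(p,q;s,r)\subset S^3$ as two closures of one common tangle: $S^1\times S^2$ appears as $0$-surgery on an unknotted circle $A$ of the diagram disjoint from the knot, so that closing the tangle up in $S^3$ (that is, not surgering $A$) yields $K(p,q;s,r)$, while the $0$-surgery yields $T(p,q;s,r)$. Writing $W=S^3\setminus\bigl(K(p,q;s,r)\cup A\bigr)$, with knot cusp $\kappa$ and axis cusp, one thus has $W(\infty_A)=S^3\setminus K(p,q;s,r)$ and $W(0_A)=S^1\times S^2\setminus T(p,q;s,r)=:Y$. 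By \cite[Theorem 1]{vafaee}, $K(p,kp\pm1;s,r)$ is an L-space knot for $s\in\{2,p-2\}$, $r=1$, so $W(\infty_A)=S^3\setminus K$ is Floer simple with $\lL\bigl(W(\infty_A)\bigr)$ a half-line $\{x:x\geq 2g(K)-1\}$; in particular $W$ is a Floer simple two-cusped manifold in the sense of \cite{hrrw}.

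To finish I would transfer this to $Y=W(0_A)$ and invoke the corollary to Theorem~\ref{4equivs}: it suffices to produce a genus-$0$ generator of $H_2(Y,\partial Y)$ together with a single L-space filling of $Y$. For the genus, I would read an explicit properly embedded spanning surface for $T(p,q;s,r)$ off Figure~\ref{fig:ttk} --- built from meridian discs of the solid torus $S^1\times D^2\subset S^1\times S^2$ and capped through the complementary solid torus --- and check that a single full twist on two (or $p-2$) strands does not raise its genus above $0$; granting Floer simplicity of $Y$, this forces $\deg\Deltas(Y)<g_Y$. For the L-space filling, I would push a sufficiently positive integral surgery on $K(p,q;s,r)$ through $W$: for $N\geq 2g(K)-1$ let $Z_N:=W(N_\kappa)$, the complement of $A$ inside the rational homology sphere $S^3_N(K)$. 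Then $Z_N(\infty_A)=S^3_N(K)$ is an L-space and $Z_N(0_A)=Y(N_\kappa)$, so it remains to see that this second filling of the rational homology solid torus $Z_N$ is also an L-space; here I would feed the Floer simplicity of $W$ and the known location of $\infty_A$ in $\lL$ into the gluing machinery of \cite{ras} and \cite{hrrw}, which controls how $\lL$ transforms under re-filling an auxiliary cusp. With genus $0$ and an L-space filling of $Y$ in hand, Theorem~\ref{4equivs} and its corollary give that $Y$ is a generalized solid torus.

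The step I expect to be the main obstacle is exactly this transfer. Everything Floer-theoretic is supplied on the $S^3$ side --- the $\infty_A$-filling of $W$ --- and one must control what survives after re-filling the axis cusp with slope $0$: that $Y$ is still Floer simple, and that the twisted torus knot surface remains genus-minimizing. This is where the sharpened control of the Turaev torsion behind Theorem~\ref{4equivs} (in particular Lemma~\ref{thm:torsionlemma}) should be applied directly to $Y$, reducing matters to showing that the polynomials $q_{i,s}$ attached to $Y$ have the shape forced by the explicit spanning surface; checking that shape is the one genuinely new computation.
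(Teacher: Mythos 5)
The paper offers no written proof of this corollary: it is stated as an immediate consequence of Vafaee's theorem together with the equivalences following Theorem \ref{4equivs} (genus $0$ plus one L-space filling implies generalized solid torus). Your skeleton matches that intended route, and your genus-$0$ input is essentially right --- the $p$-times-punctured sphere $\{pt\}\times S^2$ generates $H_2(Y,\partial Y)$ because the twisted torus knot is a closed $p$-braid in a Heegaard solid torus, so the generator has genus $0$ for every $s$ and $r$. The genuine gap is exactly the step you flag as the main obstacle: producing an L-space filling of $Y=W(0_A)$. Knowing that $Z_N(\infty_A)=S^3_N(K)$ is an L-space places no constraint whatsoever on the different filling $Z_N(0_A)$; a single point of $\lL(Z_N)$ cannot be moved to $0_A$ by any gluing machinery, so as written the case $s\in\{2,p-2\}$, $r=1$ does not close. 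What rescues it is that Vafaee's theorem applies to the whole family $K(p,kp\pm 1;s,r)$ for all $k\geq 1$ simultaneously: the fillings $Z_N(-1/k_A)$ are large surgeries on $K(p,q+kp;s,r)$ (the framing grows like $N+kp^2$ while $2g-1$ grows more slowly in $k$), hence are L-spaces for all large $k$, and the slopes $-1/k$ accumulate at $0_A$. Infinitely many L-space fillings force $Z_N$ to be Floer simple, so $\lL(Z_N)$ is a closed interval (or all of $Sl(Z_N)-\{l\}$), and since $0_A$ is not the rational longitude of $A$ in $S^3_N(K)$ (that longitude is shifted off $0$ by $-p^2/N$), the accumulation point $0_A$ lies in $\lL(Z_N)$. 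This is the missing idea; some version of it is unavoidable because the fillings of $Y$ and the surgeries on the $S^3$ knots form disjoint families of fillings of $W$.

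Two smaller points. First, your reduction of the case $s\in\{1,p-1\}$ to Theorem \ref{thm:ttkAreGST} inverts the paper's logical order: the proof of that theorem runs through the lemma on $Y_{r_1,0,r_3}$, whose proof cites the present corollary for the untwisted torus knot $T(p,q)$. Your independent isotopy argument for $s=1$ (a $(p,\pm 1)$ curve on the Heegaard torus has solid torus complement) patches this when $q\equiv\pm 1\pmod p$, but you should either say so explicitly or run $s=p-1$ through the same axis-filling argument as the $s\in\{2,p-2\}$ case rather than through the later theorem. Second, the closing plan to verify the shape of the polynomials $q_{i,s}$ via Lemma \ref{thm:torsionlemma} is redundant: once genus $0$ and one L-space filling are in hand, the corollary to Theorem \ref{4equivs} already yields $\dtgz(Y)=\emptyset$ and hence, by Theorem \ref{4equivs} itself, that $Y$ is a generalized solid torus.
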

\begin{figure}[ht]
    \centering
    \def\svgwidth{\columnwidth}
    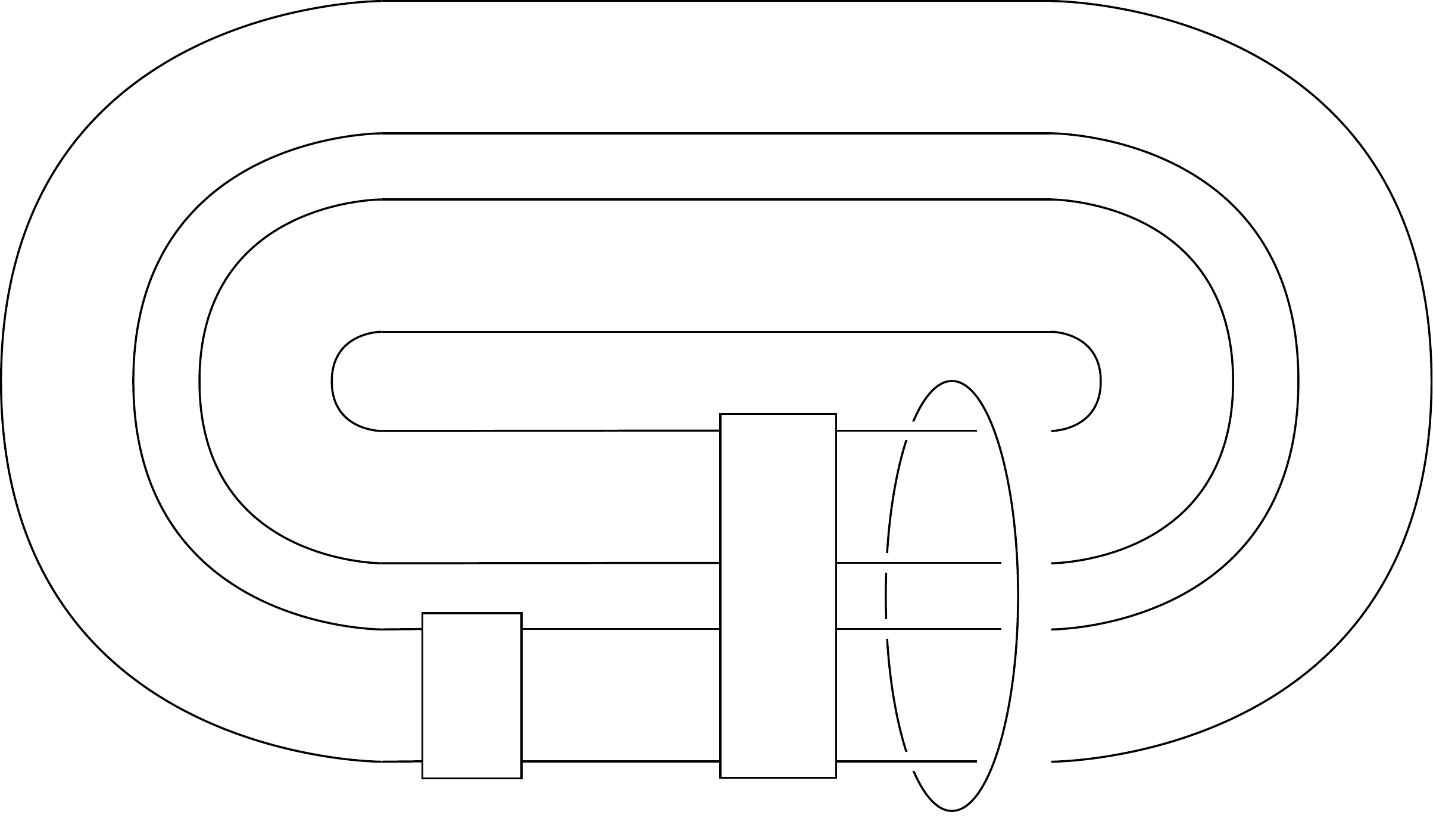
    \caption{The link \(L\)}
    \label{fig:torusKnot}
\end{figure}
We could similarly extend the results of Motegi to twisted torus knots in \(S^1 \times S^2\). Instead, we will give a more direct geometric proof.
\begin{theorem}
\label{thm:ttkAreGST}
The twisted torus knot complement \(S^1 \times S^2 \setminus T(p,q;s,r)\) is a generalized solid torus if \((p,q) = 1\) and \(s \equiv \pm q \; (p)\).
\end{theorem}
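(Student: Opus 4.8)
The plan is to replace the torsion computation behind Corollary~\ref{thm:vafaeeExtension} by a Kirby-calculus argument, and then appeal to the characterisation coming out of Theorem~\ref{4equivs}: a Floer simple rational homology $S^1\times D^2$ is a generalized solid torus precisely when it has genus $0$ and admits an L-space filling. So it is enough to describe $Y := (S^1\times S^2)\setminus N(T(p,q;s,r))$ explicitly enough to (i) see that it is Floer simple, (ii) exhibit a planar generator of $H_2(Y,\partial Y)$, and (iii) name one L-space Dehn filling.

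The first task is to convert the picture of Figure~\ref{fig:torusKnot} into a surgery presentation: present $S^1\times S^2$ as $0$--surgery on the unknot $L_2$, and replace the $(p,q)$--torus braiding and the box of $r$ full twists on $s$ strands by two further unknotted surgery curves carrying the coefficients $q/p$ and $r/s$. Then $Y$ is surgery on a small link of unknots in $S^3$, of a form amenable to slam-dunks, Rolfsen twists, and handle slides.

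The key step — and the one I expect to be the main obstacle — is to use the hypothesis $s\equiv\pm q\pmod p$ to collapse this diagram. Writing $s=\pm q+mp$, divisibility of $s\mp q$ by $p$ is exactly the numerical coincidence that lets a Rolfsen twist of the $r/s$--framed curve against $L_2$ (together with a handle slide relating the $q/p$-- and $r/s$--framed curves) clear the relevant denominators, so that after simplification $Y$ is recognisably a Seifert fibered space over the disk — equivalently, $T(p,q;s,r)$ is isotopic in $S^1\times S^2$ to a curve $\gamma$ lying on a Heegaard torus, the twisting having been absorbed into a change of the Heegaard gluing. The hypothesis $(p,q)=1$ is used to ensure $\gamma$ is primitive, so that $Y$ is genuinely a rational homology $S^1\times D^2$. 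The delicate part is the bookkeeping: tracking all slopes and orientations through the moves, and checking that the resulting Seifert invariants are the special ones for which the construction below yields a \emph{planar} generator rather than a positive-genus horizontal surface.

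Granting this, the conclusion is routine. Seifert fibered spaces over the disk are Floer simple, giving (i). For (ii), a generator of $H_2(Y,\partial Y)\cong\mathbb Z$ is assembled from the meridian disks of the two solid tori of the Heegaard splitting together with a subsurface of the Heegaard torus, resolving their intersections with $\gamma$; the congruence forces each disk to meet $\gamma$ in the minimal way, so the assembled surface is planar. For (iii), filling $\partial Y$ along the Seifert-fibre slope, or along a slope that collapses an exceptional fibre, produces a lens space (or a connected sum of lens spaces), which is an L-space. Hence by the Corollary following Theorem~\ref{4equivs}, $Y$ is a generalized solid torus. Alternatively, with the Seifert data of $Y$ in hand one may compute $\overline{\Delta}(Y)$ and $g_Y$ directly and check $\deg\overline{\Delta}(Y)<g_Y$.
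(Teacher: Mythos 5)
Your endgame is the right one---reducing to ``genus $0$ plus one L-space filling'' via the corollary to Theorem~\ref{4equivs} is exactly how the paper closes the argument---but the key step you flag as the main obstacle is not just delicate, it is false. You claim that the congruence $s\equiv\pm q\pmod p$ allows Kirby moves to collapse the surgery diagram so that $Y=(S^1\times S^2)\setminus N(T(p,q;s,r))$ is Seifert fibered over the disk, equivalently that the knot is isotopic onto a Heegaard torus. The paper itself exhibits a counterexample inside the family the theorem covers: for $p=5$, $q=s=2$ the relevant link complement is hyperbolic (checked with SnapPy), so by Thurston's Dehn filling theorem $S^1\times S^2\setminus T(5,2;2,r)$ is hyperbolic for all but finitely many $r$; a hyperbolic manifold is not Seifert fibered. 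Concretely, the proposed Rolfsen twist/handle slide cannot merge the $r$ full twists on $s$ strands with the $q/p$ torus braiding, because for $0<s<p$ the $s$ twisted strands are a proper subset of the $p$ strands: the $1/r$-framed unknot encircling them and the curve governing the torus braiding are not related by a slide that clears the denominators, and blowing down reintroduces the twisting into the knot rather than absorbing it into the Heegaard gluing. Since the complement is generically hyperbolic, \emph{no} structural recognition of this kind can produce the planar surface; it has to be built by hand.

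What the hypothesis $s\equiv\pm q\pmod p$ actually buys, and how the paper uses it, is different: it makes the curve encircling the $s$ twisted strands isotopic, in the complement of the torus knot, to the standard unknot $L_1$ encircling $q$ strands of the braid, so that $Y$ is the $1/r$ filling of $L_1$ in the three-component link complement of Figure~\ref{fig:ttk3} (with $L_2$ filled with slope $0$ to create $S^1\times S^2$). The genus-$0$ generator of $H_2$ is then constructed explicitly and inductively in Lemma~\ref{thm:sphere}, crucially so that it is disjoint from the twisting region and hence survives every $1/r$ filling of $L_1$; the single L-space filling is supplied by the $\infty$ filling of $L_1$, which gives the untwisted torus knot complement already handled by Corollary~\ref{thm:vafaeeExtension}. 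Your proposal as written has no replacement for either ingredient once the Seifert fibered claim fails, so the argument does not go through.
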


To prove this theorem, we require a lemma concerning the existence of a surface in the knot complement.

\begin{lemma}
\label{thm:sphere}
Consider the black tangle \(L \subset I \times D^2\) in figure \ref{fig:ttk1}.

\begin{figure}[ht]
    \centering
    \def\svgwidth{\columnwidth}
    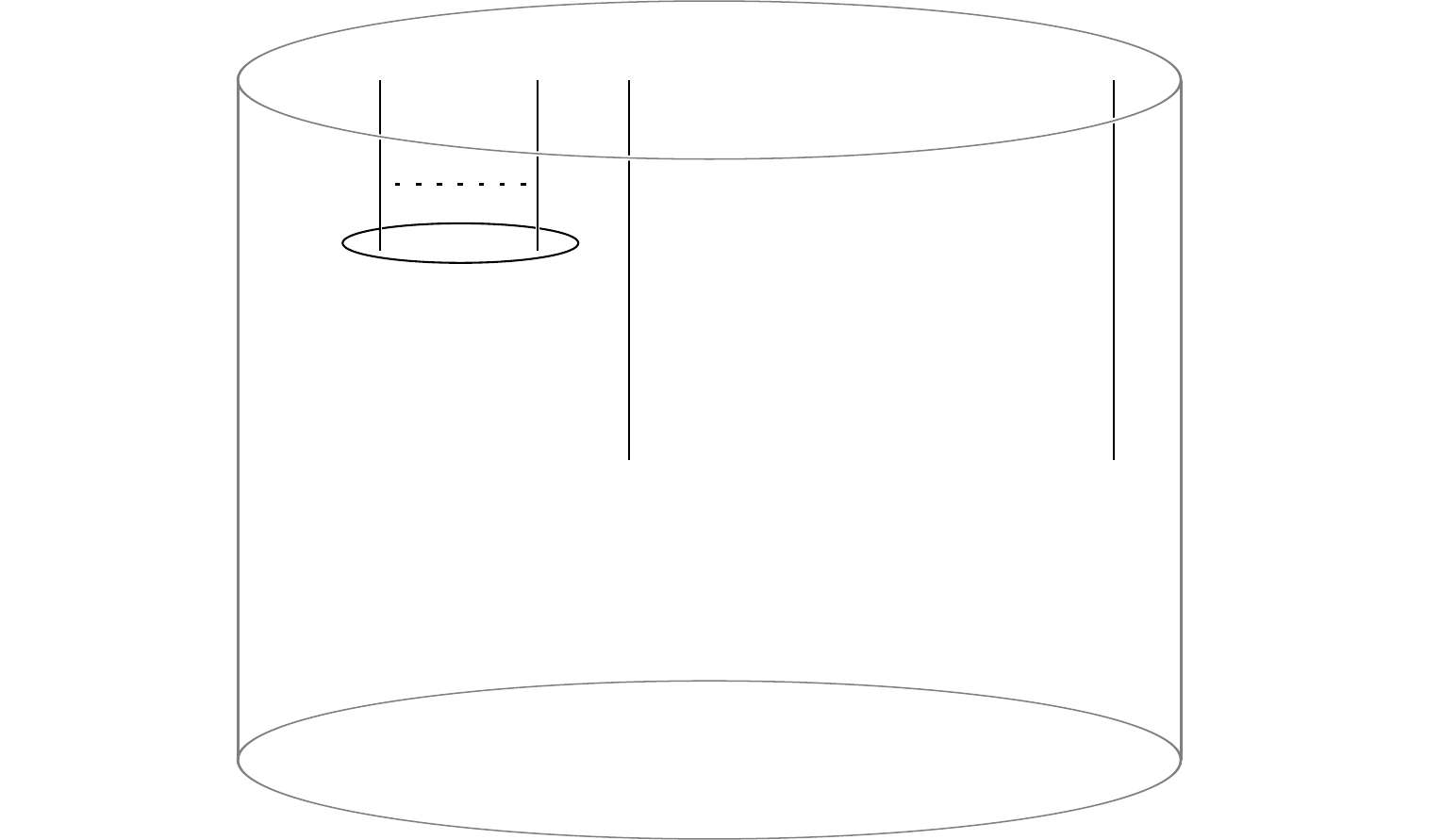
    \caption{A link in \(I \times D^2\). Under the \(q/p\) gluing, \(S_0\) glues to \(S_1\)}
    \label{fig:ttk1}
\end{figure}

There exists an embedded surface \(\Sigma \subset I \times D^2 - \nu(L)\) satisfying the following properties:

\begin{enumerate}
\item The boundary of \(\Sigma\) consists of \(q\) parallel copies of \(\{\frac{1}{2}\} \times \partial D^2\), \(p\) copies of the longitude of \(L_1\) and \(B \subset \{0,1\} \times D^2\).
\item If we glue \(\{0\} \times D^2\) to \(\{1\} \times D^2\) by an automorphism of \(D^2\) that shifts each strand of \(L\) left \(q\) times (i.e. the gluing that will close \(L_2\) to form a \(p/q\) torus knot), the boundary components in \(B\) 
join up to each other to form a surface \(\Sigma'\) with boundary only on \(L_1\) and \(I \times \partial D^2\). Moreover, \(\Sigma'\) has genus 0. 
\item \(\Sigma \cap S \times [0,1/2] = \emptyset\)
\end{enumerate}
\end{lemma}

\begin{proof}
We will build \(\Sigma\) inductively. Let \(w = kp + q\), and consider the black tangle \(L \subset I \times D^2\) in figure \ref{fig:ttk2}.

\begin{figure}[ht]
    \centering
    \def\svgwidth{\columnwidth}
    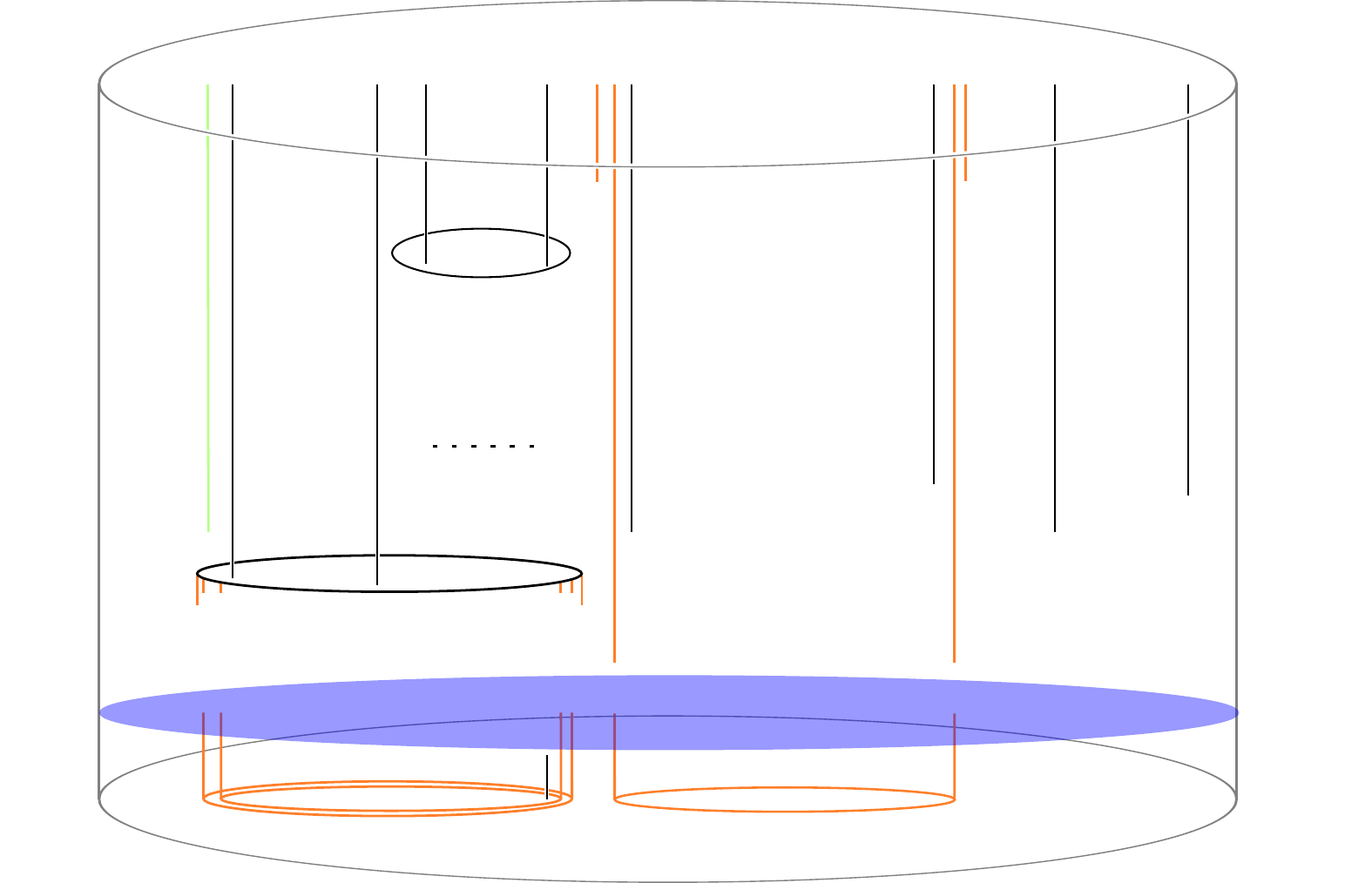
\caption{A link and an embedded surface in \(I \times D^2\). The green cylinder contains a reflected copy of figure \ref{fig:ttk1}. The pink pipes run through the holes in the blue surface.}
\label{fig:ttk2}
\end{figure}

Then by induction we can find a surface in a neighbourhood of the first \(p\) strands and \(L_1\), whose boundary components are \(p\) copies of the longitude of \(L_1\), \(q\) copies of the longitude of \(L_2\) and a 
boundary component \(B \subset \{0,1\} \times D^2\). Call this surface \(\Sigma_0\) and write \(B = B_0 \cup B_1\) where \(B_i \subset {i} \times D^2\). Also note that \(\Sigma_0\) is `surrounded' by \(L_2\).

Next, take \(k-1\) copies of \(B_1 \times I\) and insert them along the \(k-1\) remaining groups of \(p\) strands, i.e. the first one along the strands \(p+1, ..., 2p\), the second along \(2p+1, ..., 3p\) etc. Along the final \(q\) strands of the braid, we paste a copy of \(B' \times I\), where \(B'\) consists of the components of \(B_0\) contained inside the circle \(S\). Take the union of all of the pieces that we have defined so far, and call it \(\Sigma_1\). Notice that at this stage, the \(p/w\) twisted gluing will correctly match up all boundary components of \(\Sigma_1\) inside \(\{0,1\} \times D^2\) (since \(w \equiv q \; (p)\)).

Next we take \(p\) parallel copies of the blue surface in figure \ref{fig:ttk2}. As shown in the picture, each blue surface has one boundary component on \( I \times \partial D^2\), \(k\) boundary components \(C_i\) containing \(p\) strands, and one boundary component \(E\) containing \(q\) strands.

For \(C_1\), we run a pipe along the outside of \(\Sigma_1\) to join \(C_1\) to \(L_2\), and \(k-1\) parallel copies of pipes from \(\{1\} \times D^2\) to \(L_2\) each running inside the previous pipe, but still outside of \(\Sigma_1\). An example is shown in figure \ref{fig:ttk2}, shown in pink. Similarly for each other \(C_i\), run a pipe from \(C_i\) to \(\{0\} \times D^2\), and \(k+1-i\) pipes inside that one from \(\{1\} \times D^2\) to \(\{0\} \times D^2\), all of them running outside of \(\Sigma_1\). Repeat this process for each parallel copy of the blue surface. Call this new surface \(\Sigma_2\), and notice that the boundary components still all match up under a \(p/w\) twisted gluing. Also notice that the boundary of \(\Sigma_2\) contains \(p\) parallel copies of \(\{\frac{1}{2}\} \times \partial D^2\) and \(w = q + k p\) copies of the longitude of \(L_2\).

Finally we run a pipe down from each copy of the final blue boundary component \(E\), and \(p\) parallel pipes joining the red circle \(S\) to \(L_1\). We know by condition 3 that these pipes don't intersect \(\Sigma_2\). These \(p\) boundary components on \(L_1\) can be paired up with the \(p\) existing boundary components on \(L_1\), and pushed off. Finally we do an \(\infty\) filling on \(L_1\) to get our surface \(\Sigma\).

To check the genus, write \(\Sigma_1 '\) for the gluing of \(\Sigma_1\), and \(\Sigma'\) for the gluing of our new surface \(\Sigma\). Note that \(\Sigma_1 '\) is homeomorphic to the surface that we would get by gluing \(\Sigma_0\) inside figure \ref{fig:ttk1}, and so has genus 0.
To obtain \(\Sigma_2 '\), we just added \(p\) copies of the blue surface (and some pipes) all of which are disjoint from \(\Sigma_1 '\), and so \(\Sigma_2 ' \cong  \Sigma_1 ' \coprod p D^2_k \), where \(D^2_k\) is a \(k\)-punctured disc. So \(\Sigma_2'\) consists of \(p+1\) disjoint pieces all of which have genus 0. The final step simply glues one boundary component of each \(D^2_k\) to a boundary component of \(\Sigma_1 '\), so clearly \(\Sigma'\) has genus 0.

Note finally that condition 3 is clearly satisfied by the construction.
\end{proof}

\begin{lemma}
Let \(r_i \in \mathbb{Q \cup \{\infty\}}\), \(r_1 \neq 0\) and \(r_3 \neq \infty\). Then the filling \(Y_{r_1,0, r_3}\) of the following link \(L \subset S^3\) (in figure \ref{fig:ttk3}) is an L-space.

\begin{figure}[ht]
    \centering
    \def\svgwidth{\columnwidth}
    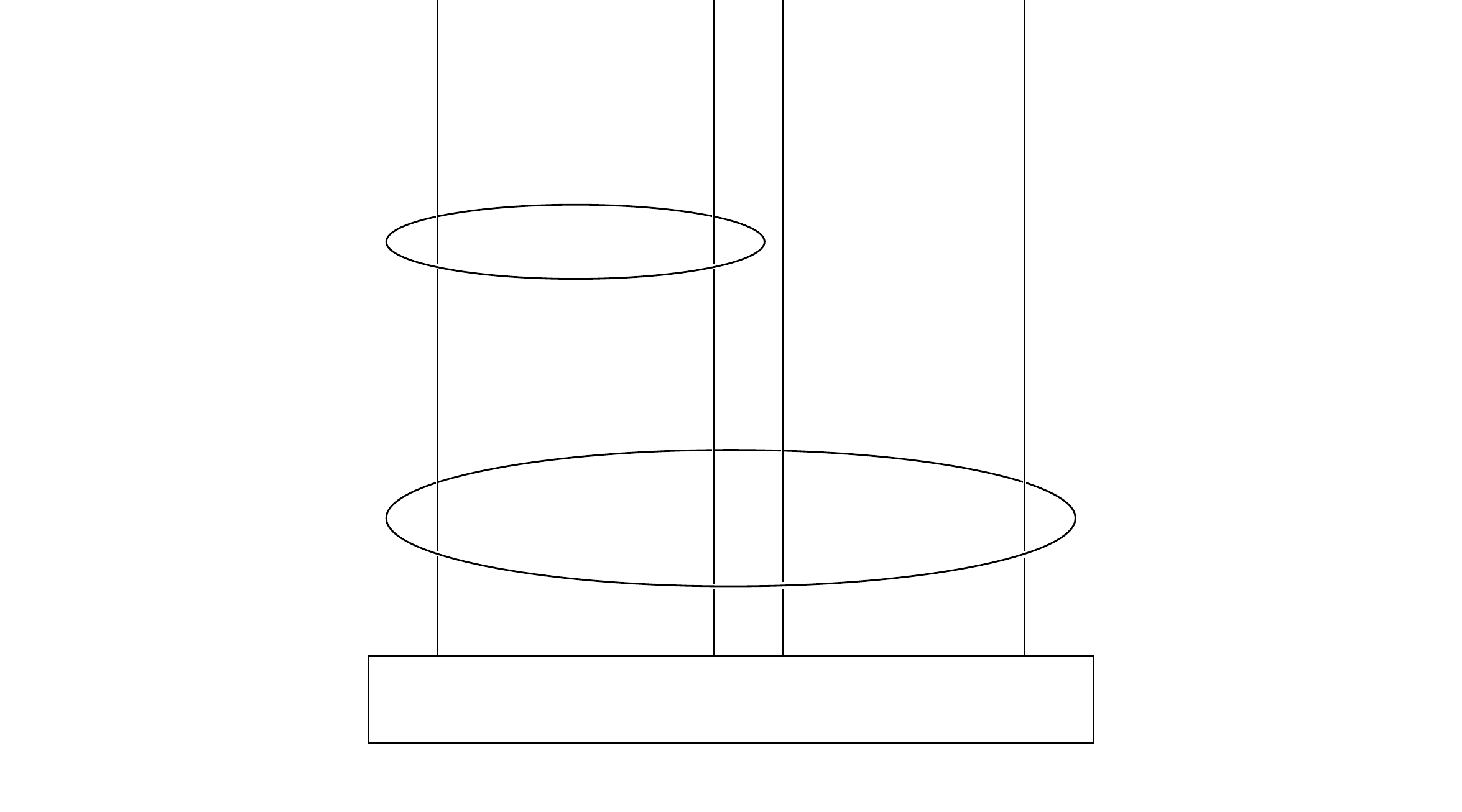
\caption{A link in \(S^3\) (the top and bottom should be identified)}
\label{fig:ttk3}
\end{figure}
\end{lemma}
\begin{proof}
Note that \(Y_{-,0,r_3}\) has genus 0, since we can take the surface \(\Sigma\) from lemma \ref{thm:sphere} and cap off the boundary components on \(L_2\). Also, \(Y_{\infty,0,r_3}\) is an L-space since it is the \(r_3\) filling of \(T(p,q) \subset S^1 \times S^2\), which is a generalized solid torus by \ref{thm:vafaeeExtension}.
So by theorem \ref{thm:maintheorem}, \(\lL(Y_{-,0,r_3}) = Sl(Y_{-,0,r_3}) - \{l\}\), hence \(Y_{r_1,0,r_3}\) is an L-space.

\end{proof}

\begin{proof}[Proof of theorem \ref{thm:ttkAreGST}]
\(S^1 \times S^2 \setminus T(p,q;q,r) = S^1 \times S^2 \setminus T(p,q;q',r) = Y_{1/r,0,-}\), so by the above lemma all of the non-longitudinal fillings are L-spaces. Hence \(S^1 \times S^2 \setminus T(p,q;s,r)\) is a generalized solid torus.
\end{proof}

Theorem \ref{thm:inffam} then follows easily by considering the family of manifolds \(Y_{\_,0,\_}\) above, for all \(p\) and \(q\).

This class of manifolds allows us to resolve a question of Rasmussen and Rasmussen. In \cite{ras}, Rasmussen and Rasmussen asked if there are infinitely many distinct Floer simple manifolds with the same Turaev torsion. We can build such a family as follows. Consider the manifold \(Y_{\_,0,\_}\) as above, with \(p = 5\) and \(q = s = 2\). Using SnapPy (\cite{snappy}) we can check that this manifold is hyperbolic, and apply Thurston's hyperbolic Dehn filling theorem.

\begin{theorem}[Thurston's hyperbolic Dehn filling theorem (\cite{thurston}]
Suppose \(M\) is hyperbolic and \(M(p/q)\) is the filling of one of the boundary components of \(M\). Then
\begin{itemize}
\item \(M(p/q)\) is hyperbolic for all but finitely many \(p/q\)
\item \(vol(M(p/q)) < vol(M)\) when \(M(p/q)\) is hyperbolic
\item \(vol(M(p/q)) \rightarrow vol(M)\) as \(p^2 + q^2 \rightarrow \infty\)
\end{itemize}
\end{theorem}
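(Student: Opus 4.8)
The statement to prove is Thurston's hyperbolic Dehn filling theorem, so the plan is to reprove it through the deformation theory of hyperbolic structures rather than to invoke it. Throughout, write \(N = \mathrm{int}(M)\) for the interior, a complete finite-volume hyperbolic \(3\)-manifold whose ends are cusps, and fix the cusp \(\mathcal{C}\) to be filled, with meridian \(\mu\) and longitude \(\lambda\) generating \(\pi_1\) of the corresponding torus. A hyperbolic structure on \(N\) is encoded by its holonomy representation \(\rho \colon \pi_1(N) \to \mathrm{PSL}(2,\mathbb{C})\), and the complete structure corresponds to the discrete faithful \(\rho_0\), which is parabolic on each cusp. The first step is to establish that the character variety of \(\pi_1(N)\) is smooth of complex dimension equal to the number of cusps in a neighbourhood of \([\rho_0]\). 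This is the standard Poincar\'e-duality (``half lives, half dies'') computation: \(H^1(N; \mathfrak{sl}_2\mathbb{C}_{\rho_0})\) has dimension equal to the number of cusps and no obstruction arises, so the complex translation lengths of the cusp holonomies furnish local holomorphic coordinates.

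Second, I would introduce the generalised Dehn surgery coordinates at \(\mathcal{C}\), keeping the remaining cusps complete (parabolic). Let \(u\) and \(v\) be the complex lengths of \(\rho(\mu)\) and \(\rho(\lambda)\); near \([\rho_0]\) these are holomorphic with \(u=v=0\) at the complete structure and are related by \(v = \tau u + O(u^2)\), where \(\tau\) is the modulus of the cusp torus. For a coprime pair \((p,q)\) the deformed structure is singled out by the linear equation \(p\,u + q\,v = 2\pi i\), which one solves for small \(|u|\) whenever \(p^2+q^2\) is large, using the coordinates from the first step. The geometric content is that for such a structure the holonomy of the curve \(p\mu+q\lambda\) is a rotation by \(2\pi\), so the incomplete metric on \(N\) completes by adjoining a single closed geodesic \(\gamma\) --- the core of the filling solid torus --- across which \(p\mu+q\lambda\) bounds a meridian disk. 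Verifying that this metric completion is the topological Dehn filling \(M(p/q)\), is smooth (no cone singularity), and is genuinely hyperbolic is the technical heart: one must analyse the developing map on a horoball neighbourhood of the cusp and show the filled end closes up to an embedded tube around \(\gamma\).

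With the completion in hand, the first bullet follows immediately: solvability of \(p u + q v = 2\pi i\) inside the neighbourhood on which the deformation and the completion are controlled holds for all but the finitely many \((p,q)\) with \(p^2+q^2\) below an explicit bound, and for every other slope \(M(p/q)\) is the hyperbolic manifold just constructed. For the third bullet, as \(p^2+q^2 \to \infty\) the solution has \(u \to 0\), so \([\rho] \to [\rho_0]\); since volume is continuous (indeed real-analytic) in the holonomy and \(M(p/q)\) converges geometrically to \(N\), we get \(\mathrm{vol}(M(p/q)) \to \mathrm{vol}(M)\). The strict inequality of the second bullet is the remaining quantitative point, which I would deduce from the Schl\"afli variation formula: interpret the family of deformed structures as hyperbolic cone manifolds on \(M(p/q)\) with cone angle along \(\gamma\) running from \(0\) (the complete cusped structure) to \(2\pi\) (the smooth filling). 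Schl\"afli gives \(\frac{d}{d\theta}\mathrm{vol} = -\tfrac12\,\ell(\gamma)\) with \(\ell(\gamma) > 0\), so volume strictly decreases, yielding \(\mathrm{vol}(M(p/q)) < \mathrm{vol}(M)\).

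I anticipate the main obstacle to be the completion step: controlling the metric completion of the deformed incomplete structure and proving it is a smooth complete hyperbolic manifold homeomorphic to the topological filling, with \(\gamma\) embedded and short. This is precisely where Thurston's original notes are least self-contained, and a fully rigorous treatment requires either his direct analysis of the developing image near the cusp or the later hyperbolic cone-manifold deformation technology; by contrast, the cohomological smoothness of the character variety and the Schl\"afli computation are comparatively routine once the coordinates are set up.
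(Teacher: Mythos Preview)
The paper does not prove this statement at all: Thurston's hyperbolic Dehn filling theorem is quoted as a classical result, attributed to \cite{thurston}, and then applied as a black box to the family \(Y_{1/r_i,0,\_}\) in order to produce infinitely many distinct hyperbolic manifolds with the same \(\overline{\Delta}\). There is therefore no ``paper's own proof'' to compare your proposal against; the appropriate move here is simply to cite the theorem and use it.

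That said, your outline is the standard route to a proof and is broadly sound as a sketch: smoothness of the \(\mathrm{PSL}(2,\mathbb{C})\)-character variety near the complete structure via the half-lives/half-dies computation, generalised Dehn surgery coordinates \((u,v)\) with \(p\,u+q\,v=2\pi i\), geometric convergence giving continuity of volume, and the Schl\"afli variation (or the Neumann--Zagier expansion) for the strict monotonicity. You correctly identify the genuinely hard step --- showing the metric completion of the deformed incomplete structure is a smooth closed hyperbolic manifold homeomorphic to the topological filling --- and that this is where Thurston's notes are least self-contained; a rigorous treatment typically goes through either a careful analysis of the developing map on a horoball or the Hodgson--Kerckhoff cone-manifold machinery. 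None of that is needed for the purposes of this paper.
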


Therefore there is some subsequence \(Y_{1/r_i, 0, \_}\) that are distinct (since they have different hyperbolic volumes) but have the same \(\overline{\Delta}\). Since, for a Floer simple manifold, there are only finitely many possible torsions corresponding to a choice of \(\overline{\Delta}\), there must therefore be an infinite set of distinct Floer simple manifolds with the same torsion.

\bibliographystyle{plain}
\bibliography{references}

\end{document}